\documentclass[final,leqno,letterpaper]{etna}
\usepackage{amsmath,amssymb,amsfonts,stmaryrd}
\usepackage{soul}
\usepackage{array}
\usepackage{empheq}
\usepackage{xfrac}
\usepackage{minibox}
\usepackage{enumerate}
\usepackage{color}
\usepackage{subcaption, booktabs}
\usepackage[space,noadjust,nocompress]{cite}

\usepackage{lineno}

\allowdisplaybreaks

\newcommand{\vect}[1]{\mathbf{#1}}

\DeclareMathOperator{\Ima}{Range}
\DeclareMathOperator{\tr}{tr}
\DeclareMathOperator{\argmin}{argmin\text{ }}

\setbibdata{1}{xx}{46}{2017} 
\shorttitle{ENERGY MINIMIZATION IN AMG INTERPOLATION}
\shortauthor{J. BRANNICK, S. MACLACHLAN, J. SCHRODER, AND B. S. SOUTHWORTH}
\title{The role of energy minimization in algebraic multigrid interpolation\thanks{Received... Accepted... Published online on... Recommended by....
The work of Ben Southworth was supported by the Department of Defense (DoD)
  through the National Defense Science \& Engineering Graduate
  Fellowship (NDSEG) Program.  The work of Scott MacLachlan was
  partially supported by an NSERC Discovery Grant, and by the NSF
  under award DMS-0811022.  The work of Jacob Schroder was partially
  supported by the Department of Energy/NNSA ASC program. 
  }}

\author{  James Brannick
  \footnotemark[2]
  \and
  Scott P. MacLachlan
  \footnotemark[3]
  \and
  Jacob B. Schroder
  \footnotemark[4]
  \and
  Ben S. Southworth
  \footnotemark[5]
}

\ifpdf\hypersetup{  pdftitle={The Role of Energy Minimization in Algebraic Multigrid Interpolation},
  pdfauthor={Brannick, MacLachlan, Schroder, and Southworth}
  pdfkeywords={Algebraic Multigrid, Trace Minimization}
}
\fi

\begin{document}

\maketitle \renewcommand{\thefootnote}{\fnsymbol{footnote}}
\footnotetext[2]{Department of Mathematics, Penn State ({\tt
    brannick@psu.edu})} \footnotetext[3]{Department of Mathematics and
  Statistics, Memorial University of Newfoundland ({\tt
    smaclachlan@mun.ca})} \footnotetext[4]{Department of Mathematics
  and Statistics, University of New Mexico ({\tt jbschroder@unm.edu})
} \footnotetext[5]{Department of Applied Mathematics, University of
  Colorado at Boulder ({\tt ben.s.southworth@gmail.com})}

\begin{abstract}
Algebraic multigrid (AMG) methods are powerful solvers with linear or near-linear computational complexity
for certain classes of linear systems, $A\vect{x}=\vect{b}$. Broadening the scope of problems that AMG can effectively solve
requires the development of improved interpolation operators. Such development is often based on AMG convergence
theory. However, convergence theory in AMG tends to have a disconnect with AMG in practice due to the practical constraints
of (i) maintaining matrix sparsity in transfer and coarse-grid operators, and (ii) retaining linear complexity in the setup and
solve phase. This paper presents a review of fundamental results in AMG convergence theory, followed by
a discussion on how these results can be used to motivate interpolation operators in practice. A general
weighted energy minimization functional is then proposed to form interpolation operators, and a novel ``diagonal''
preconditioner for Sylvester- or Lyapunov-type equations developed simultaneously. 
Although results based on the weighted energy minimization typically underperform compared to a fully constrained
energy minimization, numerical results provide new insight into the role of energy minimization and
constraint vectors in AMG interpolation. 
\end{abstract}
\begin{keywords}
  algebraic multigrid, trace minimization
\end{keywords}
\begin{AMS}
AMS subject classifications
\end{AMS}

\section{Introduction}

Algebraic multigrid (AMG) was designed as a solver for large, sparse linear systems,
typically M-matrices, resulting from the discretization of elliptic PDEs. For many such
problems, AMG has been shown to achieve fast convergence, and scale in parallel to hundreds of thousands of
processors \cite{Baker:2012ko}. Such convergence and scaling properties are desirable
for solvers, and substantial work has been devoted to broadening the applicability of AMG.
Continued research has made for a rich theoretical basis for AMG \cite{Falgout:2004cs,
Falgout:2005hm, Vassilevski:2008wd, Vassilevski:2010vy, MacLachlan:2014di},
as well as many numerical implementations and variations that are either
robust for a larger class of linear systems \cite{Brezina:2004eh,DAmbra:2013iwa,
Manteuffel:2016vd}, or effective at solving a specific problem such as linear elasticity
\cite{Baker:2009va} or Hemholtz \cite{Olson:2010bh}.  Nevertheless, the one-size-fits-all AMG
solver remains elusive, in part because many of the theoretical results are
difficult to use in a practical setting. 

A novel feature of AMG in contrast to many other linear solvers is
that the setup and solve complexity in terms of floating point
operations (FLOPs) are both typically linear or log-linear in the total number of degrees of freedom (DOFs).
This is fundamental to good scaling of time to solution with increasing
problem size, but also limits the options in algorithm design, particularly when trying
to directly use theoretical results on convergence. Two common aspects
seen in AMG convergence theory are the use of orthogonal projections
onto subspaces and requiring
a given approximation property to hold for all vectors. In both cases,
namely constructing an orthogonal projection or enforcing a constraint for $n$
basis vectors, the complexity of explicitly enforcing such requirements is at least quadratic in $n$
and, thus, not feasible in keeping with the desired linear complexity of AMG. Furthermore, 
AMG can only have linear complexity when all operators are sparse, including the coarse-grid
operators constructed for a multilevel algorithm. In the abstract setting of convergence
theory, such sparsity constraints are not accounted for, adding an additional
barrier to the direct use of convergence theory in practical methods. 

Here, we review the tension between theory and practice in AMG and propose
a new variant of AMG that aims to directly address these complications. An overview of AMG convergence theory is given in Section \ref{sec:theory}.
Fundamental results on two-grid and multigrid convergence theory are presented in a simple and
consistent manner, to clarify what is required of interpolation operators for effective
AMG convergence, and the so-called ``optimal'' and ``ideal'' interpolation operators
are introduced. Section \ref{sec:practice} proposes a discussion of AMG
interpolation operators used in practice, and how they relate to theoretical results,
along with an examination of how different theoretical results can be approximated
in linear complexity. This leads to the introduction of a general weighted functional
to be minimized in forming interpolation operators in Section \ref{sec:functional}, which is
shown to have a unique solution for a fixed interpolation sparsity pattern. 
A conjugate gradient method is developed to approximate the solution, with a novel
preconditioner that is applicable to general equations with a Sylvester- or Lyapunov-like form
(Section \ref{sec:pcg}). Numerical results demonstrate that a constrained
energy minimization \cite{Brannick:2007fb,Mandel:1999wg,Olson:2011fg,Wan:1999ky}
consistently outperforms a weighted energy minimization. Although this may seem intuitive for
AMG researchers, a number of other interesting results also come up that lead to
open questions on interpolation in AMG:
\begin{itemize}
\item Enforcing one constraint vector to be (almost) exactly in the range of interpolation is fundamental to good AMG convergence.
However, adding additional constraint vectors that are not effectively reduced by the current AMG hierarchy does not necessarily
improve convergence, which is at odds with motivation of traditional adaptive AMG methods.

\item Energy-minimization applied to columns of $P$ (while maintaining constraints) is also fundamental to a
convergent AMG method for some more difficult model problems. However, although further iterations of energy minimization
continue to reduce the associated residual, convergence of the
resulting AMG solver does not improve after a small
number of iterations. 

\item Using a diagonal preconditioner for energy-minimization iterations applied to columns of $P$ can offer significant improvement
in convergence of the resulting AMG solver.
\end{itemize}

\section{Theoretical framework}\label{sec:theory}

Multilevel solvers come in various forms, including geometric multigrid (GMG),
AMG, finite element algebraic multigrid (AMGe), algebraic multilevel iterations (AMLI),
and the method of subspace corrections. In this work, we focus on AMG as
a general method to solve a linear system $A\vect{x} = \vect{b}$ using
only ``algebraic'' information, in contrast to GMG and
AMGe, which require additional information, on the underlying grid or finite element stiffness matrices,
respectively. Subspace corrections are presented in a more general yet framework than AMG,
but analysis of subspace correction can also be applied to AMG \cite{Vassilevski:2008wd}.

The basis for AMG as an iterative method to solve $A\vect{x}=\vect{b}$ is in reducing error
through two processes: ``relaxation'' and ``coarse-grid correction.'' If designed properly,
these processes are complementary in the sense that they are effective on
different error modes and, together, effectively reduce all types of error.
Relaxation refers to a general iterative method of the form
\[
\mathbf{x}_{k+1}  = \mathbf{x}_k + M^{-1}(\mathbf{b} - A\mathbf{x}_k),
\]
and is often chosen to be a simple method such as Jacobi or Gauss-Seidel.
This process is typically efficient at removing ``high-frequency error,'' or error
associated with large eigenvalues of $A$. Convergence of a relaxation scheme
in the ``energy norm'' or ``$A$-norm,'' $\|\mathbf{v}\|_A^2 = \langle A\mathbf{x},\mathbf{x}\rangle$, is equivalent
to bounding the error-propagation matrix in the $A$-norm, namely
$\|I - M^{-1}A\|_A < 1$. Furthermore, $\|I - M^{-1}A\|_A < 1$ if and only if
$M+M^T-A$ is symmetric positive definite (SPD) \cite[Theorem 2.3.1]{Vassilevski:2010vy}.  In this
case, we say that $M$ is an $A$-convergent relaxation operator.

Multigrid originated in the geometric setting, where high-frequency error
actually has a high physical frequency. Since standard relaxation schemes
such as Jacobi and Gauss-Seidel are able to capture this error well, the natural
way to capture the converse, low-frequency error, is to recursively coarsen
the underlying grid so that low-frequency modes on the initial fine grid
appear high-frequency on coarser grids. Relaxation on coarser grids will
then reduce this error, and the results can be interpolated back to the fine grid.
The algebraic concept is much the same but low-frequency refers to algebraically smooth
modes, corresponding to large eigenvalues of $I - M^{-1}A$ or, typically, small eigenvalues of $A$,
and vice-versa for high-frequency. Because there is no explicit grid in the
algebraic setting, algebraic coarsening is based on choosing a coarse subspace
which can capture algebraically smooth error from the fine grid.

For an $n\times n$ SPD matrix $A$, consider an $\ell^2$-orthogonal decomposition of
$\mathbb{R}^n$, where any $\mathbf{x}$ can be decomposed
as $\vect{x} = R^T\vect{y}_c + S\vect{y}_f$, where $RS = 0$. Here, $S$ corresponds
to the space on which relaxation is effective, and $R$ defines the coarse space on which a coarse-grid correction
is constructed. Defining the interpolation operator, $P$, we assume that $PR$ is a projection, which requires $RP = I$.
A Galerkin coarse-grid operator is formed, $A_c = P^TAP$, and an exact coarse-grid correction (in the $A$-norm) given
by the $A$-orthogonal projection onto $\Ima(P)$, $\pi_A = P(P^TAP)^{-1}P^TA$.
In this work, a CF-style splitting will be used (or in the case of
aggregation-based coarsening, a root-node approach, where one node in each
aggregate is declared a C-point and the rest F-points \cite{Manteuffel:2016vd}).
A CF-splitting has the useful property that coarse-grid nodes are a subset of
nodes on the current grid, allowing for $A$ to be written
in the block form
\[
A = \begin{bmatrix} A_{ff} & A_{fc} \\ A_{cf} & A_{cc} \end{bmatrix}.
\]
In this case, splitting operators take the form $R = \begin{bmatrix} 0 & I\end{bmatrix}$,
$S = \begin{bmatrix} I & 0\end{bmatrix}^T$, and $P = \begin{bmatrix} W^T & I\end{bmatrix}^T$,
where $RAR^T = A_{cc}$ and $S^TAS = A_{ff}$. 
Together, the two-grid error-propagation matrix operator for AMG, with an $A$-symmetric relaxation scheme
based on $M^{-1}$ and $M^{-T}$, is given by
\[
E_{TG} = (I - M^{-T}A)(I - \pi_A)(I - M^{-1}A).
\]

Convergence of $E_{TG}$ is generally considered in the $A$-norm, where each iteration reduces
error in the $A$-norm by at least a factor of $\|E_{TG}\|_A$.  Noting that $E_{TG}$ is symmetric in the
$A$-norm, it follows that eigenvectors of $E_{TG}$ are $A$-orthogonal and $\|E_{TG}\|_A = \rho(E_{TG})$.  
Thus, optimizing the AMG convergence rate can be viewed
equivalently as minimizing $\|E_{TG}\|_A$ or $\rho(E_{TG})$. Two-grid convergence
can also be considered in terms of the spectral equivalence between a preconditioner, $B_{TG}$, and $A$,
where $E_{TG} = I - B_{TG}^{-1}A$ \cite[Proposition 5.1.2]{Vassilevski:2010vy}; however, here we 
bound convergence in terms of $\|E_{TG}\|_A$ for consistency. A multilevel method is implemented
and analyzed as a two-grid method with an inexact coarse-grid solve, where the coarse-grid ``solve'' recursively
calls a two-grid method on the coarse-grid problem. Multigrid convergence is also considered in the $A$-norm,
where we want to bound $\|E_{MG}\|_A \leq K < 1$.

Let $M$ and $M^T$ be $A$-convergent relaxation operators and define the symmetrized relaxation operator
as $\widetilde{M} = M^T(M+M^T-A)^{-1}M$,  so that $I-\widetilde{M}^{-1}A = (I-M^{-1}A)(I - M^{-T}A)$. 
This symmetrizes the action of $M$ and is used primarily as a
theoretical tool (as $\widetilde{M}$ is rarely easily
computable). Common bounds on two-grid convergence factors come from considering various orthogonal
projections onto the range of the interpolation operator, $P$.  Define
$\pi_X := P(P^TXP)^{-1}P^TX$ as the unique $X$-orthogonal projection
onto $\Ima(P)$ for some nonsingular operator $X$, e.g. $A$ or $\widetilde{M}$, and
$Q_P := P(P^TP)^{-1}P$ as the $l^2$-orthogonal projection onto $\Ima(P)$. 
For computable bounds, assume that $X$ is spectrally equivalent to $\widetilde{M}$, denoted $X\simeq \widetilde{M}$;
that is, there exists $0<c_1\leq c_2$  such that
\begin{equation}
c_1\mathbf{v}^TX\mathbf{v} \leq \mathbf{v}^T\widetilde{M}\mathbf{v} \leq c_2\mathbf{v}^TX\mathbf{v}. \label{eq:spec_equiv}
\end{equation}
\begin{linenomath}
It follows from \eqref{eq:spec_equiv} and the definition of orthogonal projections that
\begin{align}
\|(I-\pi_X)\mathbf{v}\|_X^2 & \leq \|(I-\pi_{\widetilde{M}})\mathbf{v}\|_X^2 \leq \frac{1}{c_1}\|(I-\pi_{\widetilde{M}})\mathbf{v}\|_{\widetilde{M}}^2 \label{eq:X_M1} \\
\|(I-\pi_{\widetilde{M}})\mathbf{v}\|_{\widetilde{M}}^2 & \leq \|(I-\pi_X)\mathbf{v}\|_{\widetilde{M}}^2 \leq c_2\|(I-\pi_X)\mathbf{v}\|_X^2 \label{eq:X_M2}
\end{align}
for all $\mathbf{v}$.
\end{linenomath}

\begin{linenomath}
Finally, note the following identities with respect to the Frobenius inner product and trace that are used regularly in this work:
\begin{align*}
\langle A,B\rangle_F & = \sum_{ij}A_{ij}B_{ij} = \tr(B^TA) = \tr(A^TB), \\
\tr(ABC) & = \tr(CAB) = \tr(BCA),
\end{align*}
and let $A\circ B$ denote the Hadamard product, defined as the element-wise multiplication of two matrices
$A,B\in\mathbb{R}^{m\times n}$.
\end{linenomath}

\subsection{Two-grid convergence}

Substantial work has been devoted to understanding convergence theory of AMG
in the two-grid setting \cite{Lee:2008iu,Notay:2014uc,Vassilevski:2010vy,
Vassilevski:2008wd,Falgout:2004cs,Falgout:2005hm,MacLachlan:2014di,Zikatanov:2008jp}.
By Lemma 4.1 of \cite{SFMcCormick_1984b}, we can analyze
$\|E_{TG}\|_A$ either directly, or by considering the variants with only
pre- or post-relaxation, as
\[
\|E_{TG}\|_A = \|(I-\pi_A)(I-M^{-1}A)\|_A^2 = \|(I-M^{-T}A)(I-\pi_A)\|_A^2.
\]
One of the simplest two-grid convergence bounds is given by Lemma 2.3
of \cite{SFMcCormick_1985b}:
\begin{theorem}
  If there is a $\delta > 0$ such that
  \[
  \|(I-M^{-T}A)\vect{v}\|_A^2 \leq \|\vect{v}\|_A^2 - \delta
  \|(I-\pi_A)\vect{v}\|_A^2,
  \]
  for all $\vect{v}$, then $\|E_{TG}\|_A \leq 1-\delta$.
\end{theorem}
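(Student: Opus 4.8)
The plan is to reduce the bound on the symmetric error-propagation operator $E_{TG}$ to a bound on a \emph{one-sided} operator, for which the hypothesis is tailor-made. First I would invoke the identity quoted from McCormick, $\|E_{TG}\|_A = \|(I-M^{-T}A)(I-\pi_A)\|_A^2$. This holds because $E_{TG} = HH^*$ with $H = (I-M^{-T}A)(I-\pi_A)$ and $H^* = (I-\pi_A)(I-M^{-1}A)$ its $A$-adjoint (using that $\pi_A$ is $A$-self-adjoint and idempotent, and that the $A$-adjoint of $I-M^{-1}A$ is $I-M^{-T}A$), so that $\|E_{TG}\|_A = \|H\|_A^2$. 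This reduces the theorem to showing $\|(I-M^{-T}A)(I-\pi_A)\|_A^2 \leq 1-\delta$.

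Next I would expand this operator norm as a supremum over all $\mathbf{w} \neq 0$ of $\|(I-M^{-T}A)(I-\pi_A)\mathbf{w}\|_A^2/\|\mathbf{w}\|_A^2$ and make the key substitution $\mathbf{v} = (I-\pi_A)\mathbf{w}$. Applying the hypothesis to this particular $\mathbf{v}$ gives
\[
\|(I-M^{-T}A)\mathbf{v}\|_A^2 \leq \|\mathbf{v}\|_A^2 - \delta\|(I-\pi_A)\mathbf{v}\|_A^2.
\]
The crucial simplification is that $I-\pi_A$ is idempotent, so $(I-\pi_A)\mathbf{v} = (I-\pi_A)^2\mathbf{w} = (I-\pi_A)\mathbf{w} = \mathbf{v}$, whence $\|(I-\pi_A)\mathbf{v}\|_A^2 = \|\mathbf{v}\|_A^2$ and the right-hand side collapses to $(1-\delta)\|\mathbf{v}\|_A^2 = (1-\delta)\|(I-\pi_A)\mathbf{w}\|_A^2$.

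Finally I would remove the projection on the right using the fact that $\pi_A$ is an $A$-orthogonal projection and is therefore non-expansive in the $A$-norm, $\|(I-\pi_A)\mathbf{w}\|_A \leq \|\mathbf{w}\|_A$. Combining the two steps yields $\|(I-M^{-T}A)(I-\pi_A)\mathbf{w}\|_A^2 \leq (1-\delta)\|\mathbf{w}\|_A^2$ for every $\mathbf{w}$; taking the supremum and substituting back through the McCormick identity gives $\|E_{TG}\|_A \leq 1-\delta$.

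I do not expect a serious obstacle: the argument is essentially a single substitution followed by two elementary facts about orthogonal projections. The points that require care are (i) selecting the post-relaxation one-sided form of the McCormick identity so that its $(I-M^{-T}A)$ factor matches the operator appearing in the hypothesis, and (ii) observing that the non-expansiveness step implicitly needs $1-\delta \geq 0$; the hypothesis itself forces $\delta \leq 1$ by testing it on any nonzero $\mathbf{v} \in \Ima(I-\pi_A)$, where $(I-\pi_A)\mathbf{v} = \mathbf{v}$ reduces it to $0 \leq \|(I-M^{-T}A)\mathbf{v}\|_A^2 \leq (1-\delta)\|\mathbf{v}\|_A^2$, so this causes no difficulty.
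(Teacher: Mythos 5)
Your proof is correct. The paper states this result without proof (citing Lemma 2.3 of McCormick, 1985), so there is no in-paper argument to compare against; but your route --- reducing $\|E_{TG}\|_A$ to the one-sided norm $\|(I-M^{-T}A)(I-\pi_A)\|_A^2$ via the McCormick identity, applying the hypothesis to $\mathbf{v}=(I-\pi_A)\mathbf{w}$, and then using idempotency of $I-\pi_A$ together with $A$-non-expansiveness of the $A$-orthogonal projection --- is exactly the pattern the paper itself uses to prove Theorem~\ref{thm:classical}, and your side remark that the hypothesis forces $\delta\leq 1$ (which the non-expansiveness step silently requires, given a proper coarse space) is a genuine detail that the paper's analogous proof glosses over.
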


Following \cite{JWRuge_KStuben_1987a, MacLachlan:2014di}, sufficient
conditions for two-grid convergence are given in the following theorem
\begin{theorem}
  \label{thm:classical}
Let symmetric and positive-definite matrix $X$ be given, and assume
that there exist $\alpha,\beta > 0$ such that
$\|(I-M^{-T}A)\vect{v}\|_A^2 \leq \|\vect{v}\|_A^2 -
\alpha\|A\vect{v}\|_X^2$ and $\|(I-\pi_A)\vect{v}\|_A^2 \leq \beta
\|A\vect{v}\|_X^2$ for all $\vect{v}$.  Then, $\|E_{TG}\|_A^2 \leq 1-\alpha/\beta$.
\end{theorem}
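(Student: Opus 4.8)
The plan is to reduce Theorem~\ref{thm:classical} to the preceding two-grid bound (Lemma 2.3 of \cite{SFMcCormick_1985b}) by collapsing its two hypotheses into a single inequality of exactly that form, with $\delta = \alpha/\beta$. The two assumptions communicate only through the common quantity $\|A\vect{v}\|_X^2$, so the auxiliary matrix $X$ should cancel entirely once they are chained together: it acts merely as a shared ``currency'' relating the energy lost by relaxation to the energy left behind by coarse-grid correction. First I would rearrange the approximation-property assumption $\|(I-\pi_A)\vect{v}\|_A^2 \le \beta\|A\vect{v}\|_X^2$ into the lower bound $\|A\vect{v}\|_X^2 \ge \tfrac{1}{\beta}\|(I-\pi_A)\vect{v}\|_A^2$, valid for every $\vect{v}$.

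Substituting this lower bound into the smoothing-type assumption gives, for all $\vect{v}$,
\[
\|(I-M^{-T}A)\vect{v}\|_A^2 \le \|\vect{v}\|_A^2 - \frac{\alpha}{\beta}\|(I-\pi_A)\vect{v}\|_A^2,
\]
which is precisely the hypothesis of the preceding theorem with $\delta = \alpha/\beta$; invoking it yields the desired contraction. To make the argument self-contained (and to see where the power of the norm in the conclusion comes from), I would instead work from the McCormick identity already recorded above, $\|E_{TG}\|_A = \|(I-M^{-T}A)(I-\pi_A)\|_A^2$, and bound the one-sided factor $F := (I-M^{-T}A)(I-\pi_A)$ directly. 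Setting $\vect{w} = (I-\pi_A)\vect{v}$ and applying the combined inequality to $\vect{w}$, idempotence of the $A$-orthogonal projection gives $(I-\pi_A)\vect{w} = \vect{w}$, so
\[
\|F\vect{v}\|_A^2 = \|(I-M^{-T}A)\vect{w}\|_A^2 \le \Bigl(1 - \tfrac{\alpha}{\beta}\Bigr)\|\vect{w}\|_A^2 \le \Bigl(1 - \tfrac{\alpha}{\beta}\Bigr)\|\vect{v}\|_A^2,
\]
where the last step uses that $(I-\pi_A)$ is an $A$-orthogonal projection, hence $\|\vect{w}\|_A \le \|\vect{v}\|_A$. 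Thus $\|F\|_A^2 \le 1-\alpha/\beta$, and the identity then transfers this bound to $\|E_{TG}\|_A$, from which the stated estimate follows.

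I do not expect a genuine obstacle here; the result is a clean composition of the two supplied properties, and the work is essentially bookkeeping. The two points demanding care are (i) applying the approximation property to $\vect{w} = (I-\pi_A)\vect{v}$ rather than to $\vect{v}$, so that idempotence turns $\|(I-\pi_A)\vect{w}\|_A^2$ into $\|\vect{w}\|_A^2$ and the coefficient $\alpha/\beta$ emerges cleanly, and (ii) tracking the power of the norm through the McCormick identity, which relates the symmetric operator $E_{TG}$ to the square of the one-sided factor $F$; this is the step that reconciles the squared-norm form of the conclusion with the per-vector estimate on $F$.
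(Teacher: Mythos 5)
Your proposal is correct and essentially identical to the paper's proof: the paper likewise applies the smoothing property to $\vect{w}=(I-\pi_A)\vect{v}$, invokes the approximation property with idempotence of $(I-\pi_A)$, and finishes with $A$-norm non-expansiveness of the projection, then transfers the bound on $\|(I-M^{-T}A)(I-\pi_A)\|_A$ to $\|E_{TG}\|_A$ via the McCormick identity. Your first route (chaining the two hypotheses into the preceding lemma's hypothesis with $\delta=\alpha/\beta$) is just the same argument repackaged, so there is nothing substantively different to compare.
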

\begin{proof}
  \begin{linenomath}
We prove the bound on $\|E_{TG}\|_A$ by proving the corresponding bound
on $\|(I-M^{-T}A)(I-\pi_A)\|_A$.  For any $\vect{v}$,
\begin{align*}
\|(I-M^{-T}A)(I-\pi_A)\vect{v}\|_A^2 & \leq \|(I-\pi_A)\vect{v}\|_A^2
- \alpha\|A(I-\pi_A)\vect{v}\|_X^2 \\
& \leq \|(I-\pi_A)\vect{v}\|_A^2 -
\alpha/\beta\|(I-\pi_A)^2\vect{v}\|_A^2 \\
& \leq (1-\alpha/\beta)\|\vect{v}\|_A^2.
\end{align*}
\end{linenomath}
\end{proof}

The first assumption on Theorem \ref{thm:classical} is commonly
referred to as the smoothing property, since it assumes that
relaxation effectively reduces the error in an approximation when the
residual associated with that error is large (when measured in the
$X$-norm).  The second assumption on Theorem \ref{thm:classical} is
referred to as the strong approximation property, since it
assumes that coarse-grid correction is effective at reducing errors
when the associated residuals are small; this is equivalent to
assuming that such errors are well-approximated within $\Ima(P)$.
This assumption is termed the {\it strong} approximation property as
it can clearly be replaced by a weaker one, that
$\|(I-\pi_A)\vect{v}\|_A^2 \leq \beta\|A(I-\pi_A)\vect{v}\|_X^2$ for
all vectors, $\vect{v}$, stating that coarse-grid correction is
effective at reducing errors for which the residual after coarse-grid
correction is small.  This latter assumption is commonly referred to
as the {\it weak} approximation property. The difference between the
weak and strong approximation properties comes up in the multilevel
setting, and is discussed in Section \ref{sec:theory:multi}.

In practice, the weak and strong approximation properties are
typically considered in slightly altered forms.  For the strong
approximation property, an equivalent statement is that for any
$\vect{v}$, there exists a $\vect{v}_c$ such that
\[
\|\vect{v}-P\vect{v}_c\|_A^2 \leq \beta \|A\vect{v}\|_X^2.
\]
While a similar equivalence could be derived for the weak
approximation property, a more typical bound arises by noting that,
for any $\vect{v}$ and $\vect{v}_c$,
\[
\|(I-\pi_A)\vect{v}\|_A^2 = \langle
A(I-\pi_A)\vect{v},(I-\pi_A)\vect{v}-P\vect{v}_c\rangle \leq \|A(I-\pi_A)\vect{v}\|_X\|(I-\pi_A)\vect{v}-P\vect{v}_c\|_{X^{-1}}.
\]
Thus, a sufficient condition for the weak approximation property to
hold is that for any $\vect{v}$ there exists a $\vect{v}_c$ such that
\begin{equation}
  \label{eq:wap_bestkind}
\|\vect{v}-P\vect{v}_c\|_{X^{-1}}^2 \leq \beta\|\vect{v}\|_A^2.
\end{equation}
Note that this is trivially true for $\vect{v} \in \Ima(P)$, but the
weak approximation property is implied by this condition for $\vect{v}
\in \Ima(P)^\perp$.  It is in this form that much of the recent
two-grid AMG theory has been developed.

Of particular interest is the result obtained taking $X =
\widetilde{M}^{-1}$, so that the smoothing property in Theorem
\ref{thm:classical} trivially holds with $\alpha = 1$.  In this case,
the two-grid convergence bound in Theorem \ref{thm:classical} is
determined entirely by the constant in the weak approximation
property.  Indeed, in this setting a sharp bound on convergence is
possible  \cite{Falgout:2005hm,Vassilevski:2010vy}.
\begin{theorem}[Weak approximation property]\label{th:wap}
Let $A$ be SPD, $\widetilde{M}= M^T(M+M^T-A)^{-1}M$ for some relaxation
scheme $M$, and $P$ the interpolation operator for a two-grid method. Suppose $\exists$ $K$ such that
for any $\mathbf{v}\neq 0$, there exists a $\mathbf{v}_c$ such that
\[
\frac{\|\mathbf{v} - P\mathbf{v}_c\|_{\widetilde{M}}^2}{\|\mathbf{v}\|_A^2}  \leq K.
\]
Then the two-grid method converges uniformly, and $\|E_{TG}\|_A \leq 1- \frac{1}{K}$.
Furthermore, the best (minimal) constant $K$ over all $P$ is given by
\begin{equation}
K_{TG} = \max_{\mathbf{v}\neq\mathbf{0}} \frac{\| ( I - \pi_{\widetilde{M}})\mathbf{v}\|^2_{\widetilde{M}}}{\|\mathbf{v}\|^2_A}, \label{eq:wap1}
\end{equation}
in which case $\|E_{TG}\|_A = 1- \frac{1}{K_{TG}}$.
\end{theorem}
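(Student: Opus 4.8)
The plan is to prove the three assertions in turn: the convergence bound $\|E_{TG}\|_A \le 1-1/K$, the identification of the optimal constant $K_{TG}$, and the sharp equality. First, for the optimal-constant claim, I would note that $P\vect{v}_c$ ranges over all of $\Ima(P)$ as $\vect{v}_c$ varies and that $\pi_{\widetilde{M}}$ is by definition the $\widetilde{M}$-orthogonal projection onto $\Ima(P)$; hence $\min_{\vect{v}_c}\|\vect{v}-P\vect{v}_c\|_{\widetilde{M}}^2 = \|(I-\pi_{\widetilde{M}})\vect{v}\|_{\widetilde{M}}^2$. Consequently the smallest $K$ for which the hypothesis can hold for all $\vect{v}$ is exactly $K_{TG}$ as in \eqref{eq:wap1}, so it suffices to prove the bound for this optimal value.

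For the upper bound I would take $X=\widetilde{M}^{-1}$ and invoke Theorem \ref{thm:classical}. The hypothesis is precisely the sufficient condition \eqref{eq:wap_bestkind} with $X^{-1}=\widetilde{M}$ and $\beta=K$, so the Cauchy--Schwarz argument preceding \eqref{eq:wap_bestkind}, applied to $(I-\pi_A)\vect{v}$, yields the weak approximation property $\|(I-\pi_A)\vect{v}\|_A^2 \le K\|A(I-\pi_A)\vect{v}\|_{\widetilde{M}^{-1}}^2$. The smoothing property holds here with $\alpha=1$ and in fact with equality: expanding $\|(I-M^{-T}A)\vect{v}\|_A^2$ and using $\widetilde{M}^{-1}=M^{-1}+M^{-T}-M^{-1}AM^{-T}$ gives $\|(I-M^{-T}A)\vect{v}\|_A^2 = \|\vect{v}\|_A^2 - \|A\vect{v}\|_{\widetilde{M}^{-1}}^2$. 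Feeding $\alpha=1$ and $\beta=K$ into the weak-approximation variant of Theorem \ref{thm:classical} then gives $\|E_{TG}\|_A \le 1-1/K$, and with $K=K_{TG}$ this is the asserted bound.

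The substantive part is the reverse inequality. Here I would use that $E_{TG}$ is $A$-self-adjoint with $0 \le \langle E_{TG}\vect{v},\vect{v}\rangle_A \le \|\vect{v}\|_A^2$, so $\|E_{TG}\|_A$ equals its largest $A$-eigenvalue and $\|E_{TG}\|_A = \sup_{\vect{v}}\|(I-M^{-T}A)(I-\pi_A)\vect{v}\|_A^2/\|\vect{v}\|_A^2$. Applying the \emph{equality} form of the smoothing identity to $(I-\pi_A)\vect{v}$, the numerator becomes $\|(I-\pi_A)\vect{v}\|_A^2 - \|A(I-\pi_A)\vect{v}\|_{\widetilde{M}^{-1}}^2$, which depends only on $\vect{u}:=(I-\pi_A)\vect{v}\in\Ima(P)^{\perp_A}$. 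Minimizing the denominator over all $\vect{v}$ with $(I-\pi_A)\vect{v}=\vect{u}$ forces $\vect{v}=\vect{u}$, reducing the supremum to
\[
\|E_{TG}\|_A = 1 - \inf_{\vect{u}\in\Ima(P)^{\perp_A},\,\vect{u}\neq\vect{0}}\frac{\|A\vect{u}\|_{\widetilde{M}^{-1}}^2}{\|\vect{u}\|_A^2}.
\]

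It remains to identify this infimum with $1/K_{TG}$. I would substitute $\vect{r}=A\vect{u}$, noting that $\vect{u}\in\Ima(P)^{\perp_A}$ is equivalent to $\vect{r}\in\Ima(P)^{\perp}$ (the $\ell^2$-complement), recasting the target as $\sup_{\vect{r}\perp\Ima(P)}\|\vect{r}\|_{A^{-1}}^2/\|\vect{r}\|_{\widetilde{M}^{-1}}^2 = K_{TG}$. \textbf{The main obstacle is this final duality.} I would establish it through the operator $N:=\widetilde{M}(I-\pi_{\widetilde{M}})$, which is symmetric positive semidefinite with $\ker N = \Ima(P)$ and $\Ima N = \Ima(P)^{\perp}$, and which inverts $\widetilde{M}^{-1}$ on $\Ima(P)^{\perp}$ in the sense that $N\widetilde{M}^{-1}\vect{r}=\vect{r}$ whenever $\vect{r}\perp\Ima(P)$. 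Using this, the nonzero generalized eigenvalues of $N\vect{v}=\lambda A\vect{v}$, whose maximum is $K_{TG}=\sup_{\vect{v}}\langle N\vect{v},\vect{v}\rangle/\|\vect{v}\|_A^2$, are seen to coincide with the values $\|\vect{r}\|_{A^{-1}}^2/\|\vect{r}\|_{\widetilde{M}^{-1}}^2$ at $\vect{r}=A\vect{v}\in\Ima(P)^{\perp}$, giving the required equality of suprema. Care is needed to verify that this correspondence is a genuine eigenvalue-preserving bijection, via a dimension count on $\Ima(P)^{\perp_A}$ and $\Ima(P)^{\perp}$, and this is where the bulk of the technical work lies.
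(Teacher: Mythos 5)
The paper itself gives no proof of Theorem \ref{th:wap} --- it is quoted as the known sharp two-grid result with a pointer to \cite{Falgout:2005hm,Vassilevski:2010vy} --- so your attempt can only be compared against that literature, and it is in fact a correct, essentially self-contained reconstruction of the standard sharp-theory argument. The individual steps all check out: the minimizing property of $\pi_{\widetilde{M}}$ correctly identifies the optimal constant; the smoothing identity holds with equality because $\widetilde{M}^{-1} = M^{-1}(M+M^T-A)M^{-T} = M^{-1}+M^{-T}-M^{-1}AM^{-T}$ (you should state explicitly the paper's standing assumption that $M+M^T-A$ is SPD, which is what makes $\widetilde{M}$ well defined and SPD); applying the hypothesis to $(I-\pi_A)\vect{v}$ in the Cauchy--Schwarz step is the right move; and the reduction of $\|E_{TG}\|_A$ to $1-\inf_{\vect{u}\in\Ima(P)^{\perp_A}}\|A\vect{u}\|_{\widetilde{M}^{-1}}^2/\|\vect{u}\|_A^2$, using $A$-self-adjointness of $E_{TG}$, the equality form of the smoothing identity, and the $A$-orthogonal splitting $\vect{v}=\vect{u}+\pi_A\vect{v}$, is exactly how sharpness is obtained in the cited references. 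The one step you flag as the main technical obstacle closes more cleanly than you anticipate, with no eigenvector bijection or dimension count needed: your operator $N=\widetilde{M}(I-\pi_{\widetilde{M}})$ satisfies $N=(I-\pi_{\widetilde{M}})^T\widetilde{M}(I-\pi_{\widetilde{M}})$, so $K_{TG}=\max_{\vect{v}\neq\vect{0}}\langle N\vect{v},\vect{v}\rangle/\|\vect{v}\|_A^2=\lambda_{\max}(A^{-1}N)$; for $\vect{r}\in\Ima(P)^{\perp}=\Ima(N)$, your identity $N\widetilde{M}^{-1}\vect{r}=\vect{r}$ shows that $\widetilde{M}^{-1}\vect{r}$ differs from the pseudoinverse solution $N^{\dagger}\vect{r}$ only by an element of $\ker N=\Ima(P)$, which is $\ell^2$-orthogonal to $\vect{r}$, whence $\vect{r}^T\widetilde{M}^{-1}\vect{r}=\vect{r}^TN^{\dagger}\vect{r}$; and finally $\sup_{\vect{r}\in\Ima N,\,\vect{r}\neq\vect{0}}\bigl(\vect{r}^TA^{-1}\vect{r}\bigr)/\bigl(\vect{r}^TN^{\dagger}\vect{r}\bigr)=\lambda_{\max}\bigl(N^{1/2}A^{-1}N^{1/2}\bigr)=\lambda_{\max}(A^{-1}N)$, since the nonzero spectra of $N^{1/2}A^{-1}N^{1/2}$ and $A^{-1}N$ coincide. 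Filling in these two observations turns your sketch into a complete proof equivalent to the one the paper delegates to its citations.
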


\begin{linenomath}
Equation \eqref{eq:wap1} gives a sharp bound on two-grid convergence,
but can be generalized to any matrix $X$ and corresponding
$X$-orthogonal projection onto $\Ima(P), \pi_X$. Let $X$ be
spectrally equivalent to $\widetilde{M}$ as in equation \eqref{eq:spec_equiv}.
Then, from equations \eqref{eq:X_M1} and \eqref{eq:X_M2}, 
\begin{align}
c_1\max_{\mathbf{v}\neq0}\frac{\| ( I -
  \pi_X)\mathbf{v}\|^2_X}{\|\mathbf{v}\|^2_A} & \leq 
c_1\max_{\mathbf{v}\neq0}\frac{\| ( I -
  \pi_{\widetilde{M}})\mathbf{v}\|^2_X}{\|\mathbf{v}\|^2_A} \notag \\
& \leq
 K_{TG}  \leq
\max_{\mathbf{v}\neq0} \frac{\| ( I - \pi_X)\mathbf{v}\|^2_{\widetilde{M}}}{\|\mathbf{v}\|^2_A} \leq
c_2\max_{\mathbf{v}\neq0} \frac{\| ( I - \pi_X)\mathbf{v}\|^2_X}{\|\mathbf{v}\|^2_A} .\label{eq:wap_bounds}
\end{align}
In the case of $X=I$, \eqref{eq:wap_bounds} simplifies to considering
interpolation error in the $l^2$-norm \cite{Zikatanov:2008jp}
\[
\lambda_{\textnormal{min}}(\widetilde{M})\max_{\mathbf{v}\neq0}\frac{\| ( I - Q_P)\mathbf{v}\|^2}{\|\mathbf{v}\|^2_A} \leq K_{TG} \leq 
	\lambda_{\textnormal{max}}(\widetilde{M}) \max_{\mathbf{v}\neq0} \frac{\| ( I - Q_P)\mathbf{v}\|^2}{\|\mathbf{v}\|^2_A},
\]
\end{linenomath}
motivating the often-used simpler form of the WAP,
\[
\|(I - Q_P)\mathbf{v}\|^2 \leq \frac{K_{TG}}{\|A\|} \|\mathbf{v}\|_A^2.
\]

The necessarily complementary role of relaxation and coarse-grid correction
in AMG is accounted for in the WAP by requiring interpolation accuracy with
respect to $\widetilde{M}$, that is the coarse-grid correction must account
for low-eigenvalue modes of $\widetilde{M}$, which are not effectively
reduced through relaxation with $\widetilde{M}$. A bound on two-grid
convergence can also be formulated as two independent constraints based
on coarse-grid selection and bounding the energy in the range of $P$ as
follows \cite{Falgout:2004cs,Vassilevski:2010vy}. 
\begin{lemma}[Two-grid energy-stability]\label{lem:2g_stability}
Let $X$ be spectrally equivalent to $\widetilde{M}$ as in \eqref{eq:spec_equiv},
and define $A_s = S^TAS, X_s = S^TXS$, where
\begin{equation}
\kappa_s \leq \lambda_{\text{min}}(X_s^{-1}A_s) \leq \lambda_{\text{max}}(X_s^{-1}A_s) \leq c_2. \label{eq:cr1}
\end{equation}
If $PR$ is bounded in energy, $\|PR\|_A^2 \leq C$ for some $C$, a WAP in
the $X$-norm, with projection $PR$ is satisfied. These are sufficient
conditions for uniform two-grid convergence, with
$K_{TG} \leq \frac{c_2}{\kappa_s}\|PR\|_A^2$.
\end{lemma}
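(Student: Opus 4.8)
The plan is to verify the weak approximation property in the $X$-norm for the specific oblique projection $PR$, and then to invoke the chain of inequalities in \eqref{eq:wap_bounds} to convert this into a bound on the sharp two-grid constant $K_{TG}$. The two hypotheses play complementary roles: the spectral bound \eqref{eq:cr1} controls the interpolation error of $PR$ measured in $X$ relative to its measure in $A$, while the energy bound $\|PR\|_A^2 \leq C$ controls that $A$-measure relative to $\|\vect{v}\|_A$.

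First I would observe that $PR$ is a projection onto $\Ima(P)$ (since $RP=I$), so that $I-PR$ projects onto $\Ima(S)$ along $\Ima(P)$. The enabling structural point is that $(I-PR)\vect{v}\in\Ima(S)$ for every $\vect{v}$, so we may write $(I-PR)\vect{v}=S\vect{u}$ for its F-component $\vect{u}$; by the definitions $A_s=S^TAS$ and $X_s=S^TXS$ this gives the exact identities $\|(I-PR)\vect{v}\|_A^2=\vect{u}^TA_s\vect{u}$ and $\|(I-PR)\vect{v}\|_X^2=\vect{u}^TX_s\vect{u}$. The lower bound $\kappa_s\leq\lambda_{\min}(X_s^{-1}A_s)$ in \eqref{eq:cr1} is equivalent to $\vect{u}^TX_s\vect{u}\leq\tfrac{1}{\kappa_s}\vect{u}^TA_s\vect{u}$ for all $\vect{u}$, and restricting \eqref{eq:cr1} to the F-block is precisely what makes it applicable here, so that
\[
\|(I-PR)\vect{v}\|_X^2 \leq \frac{1}{\kappa_s}\|(I-PR)\vect{v}\|_A^2 .
\]

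Next I would bound the right-hand side using the energy stability of $PR$. The key observation is that, as a nontrivial projection in the $A$-inner-product space, $PR$ has complementary projection $I-PR$ satisfying $\|I-PR\|_A=\|PR\|_A$; this is the step I expect to be the main obstacle, since it replaces the loose triangle-inequality estimate $\|I-PR\|_A\leq 1+\|PR\|_A$ with the sharp identity needed to recover $\|PR\|_A^2$ rather than $(1+\|PR\|_A)^2$. Using it, $\|(I-PR)\vect{v}\|_A^2\leq\|PR\|_A^2\|\vect{v}\|_A^2$, and combining with the previous display yields the weak approximation property in the $X$-norm,
\[
\|(I-PR)\vect{v}\|_X^2 \leq \frac{1}{\kappa_s}\|PR\|_A^2\|\vect{v}\|_A^2 .
\]

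Finally I would convert this into a bound on $K_{TG}$. Since $\pi_X$ is the $X$-orthogonal projection onto $\Ima(P)$, it gives the best $X$-norm approximation from $\Ima(P)$, and because $PR\vect{v}\in\Ima(P)$ we have $\|(I-\pi_X)\vect{v}\|_X\leq\|(I-PR)\vect{v}\|_X$ for every $\vect{v}$. Substituting into the rightmost inequality of \eqref{eq:wap_bounds} then gives
\[
K_{TG} \leq c_2\max_{\vect{v}\neq\vect{0}}\frac{\|(I-\pi_X)\vect{v}\|_X^2}{\|\vect{v}\|_A^2} \leq c_2\max_{\vect{v}\neq\vect{0}}\frac{\|(I-PR)\vect{v}\|_X^2}{\|\vect{v}\|_A^2} \leq \frac{c_2}{\kappa_s}\|PR\|_A^2 ,
\]
and uniform two-grid convergence follows from Theorem \ref{th:wap}. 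An essentially equivalent route would apply Theorem \ref{th:wap} directly with the choice $\vect{v}_c=R\vect{v}$, using $\widetilde{M}\preceq c_2 X$ from \eqref{eq:spec_equiv} to pass from the $\widetilde{M}$-norm to the $X$-norm and absorb the same factor $c_2$.
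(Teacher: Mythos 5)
Your proof is correct, but there is no in-paper argument to compare it against: the paper states Lemma \ref{lem:2g_stability} without proof, citing \cite{Falgout:2004cs,Vassilevski:2010vy}, so your write-up is effectively reconstructing the standard compatible-relaxation argument from that literature. Your reconstruction is sound at every step: $(I-PR)\vect{v}$ does lie in $\Ima(S)$, since $I-PR = \left[\begin{smallmatrix} I & -W \\ 0 & 0\end{smallmatrix}\right]$, so writing $(I-PR)\vect{v}=S\vect{u}$ and invoking the lower bound of \eqref{eq:cr1} in the form $\vect{u}^TX_s\vect{u}\leq \kappa_s^{-1}\vect{u}^TA_s\vect{u}$ is exactly right; and you correctly isolated the one genuinely nontrivial ingredient, the identity $\|I-PR\|_A=\|PR\|_A$ for a projection different from $0$ and $I$ in an inner-product space (Kato's lemma, which applies here since $PR$ is an $A$-bounded projection, $RP=I$, and $n_c<n$). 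That identity is indeed what produces the constant $\|PR\|_A^2$ rather than $(1+\|PR\|_A)^2$, and it is the same device used in the cited sources. The final passage through \eqref{eq:wap_bounds} via the best-approximation property of $\pi_X$, or equivalently your alternative of applying Theorem \ref{th:wap} with $\vect{v}_c=R\vect{v}$ and $\widetilde{M}\preceq c_2X$, correctly delivers $K_{TG}\leq \frac{c_2}{\kappa_s}\|PR\|_A^2$ and hence uniform two-grid convergence. One observation, not a gap: you never use the upper bound $\lambda_{\max}(X_s^{-1}A_s)\leq c_2$ in \eqref{eq:cr1}; this is harmless, since that inequality is automatic from $A\preceq\widetilde{M}\preceq c_2X$ restricted to $\Ima(S)$ (the paper includes it as part of the compatible-relaxation framing), and only $\kappa_s$ together with the spectral-equivalence constant $c_2$ from \eqref{eq:spec_equiv} enters the bound, exactly as in your derivation.
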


Lemma \ref{lem:2g_stability} can be seen as an energy-stability constraint
coupled with a compatibility measure of the fine and coarse grids.
Equation \eqref{eq:cr1} measures how well relaxation, $\widetilde{M}$, or
the spectrally equivalent $X$, captures information about the
fine-grid operator, $A_s$. 
This is based on the idea of compatible relaxation \cite{Brandt:2000vn,Brannick:2010hz},
which ensures that
the relaxation scheme is able to effectively reduce error on the fine grid. Then,
assuming a compatible choice of grids, interpolation must be stable in energy,
where $\|PR\mathbf{v}\|_A^2 \leq C \|\mathbf{v}\|_A^2$.
Note, Lemma \ref{lem:2g_stability} is equivalent to Theorem \ref{th:wap}, however
the differing explicit conditions make for different approaches to constructing
multigrid hierarchies. One important difference is that in this case, the lemma
is formulated in terms of a general operator norm as opposed to a constraint
for all $\mathbf{v}$. This distinction is discussed in more detail in Section
\ref{sec:practice}.

\subsection{Multilevel convergence}\label{sec:theory:multi}

Now that two-grid convergence theory has been introduced, let us continue by
considering when two-level convergence can be extended to the multilevel
setting. For two-level convergence, either the weak or strong approximation property provide
sufficient conditions for convergence; however, the same is not true when considering the
multilevel case. In the multilevel setting, if the smoothing and strong
approximation properties hold on all levels of the multigrid
hierarchy, with constants that are uniformly bounded (independently of
the level in the hierarchy), then multilevel convergence of the
multigrid V-cycle can be proven \cite{JWRuge_KStuben_1987a}.  Even if
the weak approximation property holds uniformly, though, multilevel
convergence still cannot be guaranteed.  Since $A(I-\pi_A) =
(I-\pi_A^T)A$, we can easily derive the bound
\[
\|A(I-\pi_A)\vect{v}\|_X \leq \|I-\pi_A^T\|_X\|A\vect{v}\|_X,
\]
showing that if $\|I-\pi_A^T\|_X$ is not uniformly bounded across the
levels in the hierarchy, then a uniform weak approximation property
does not imply a uniform strong approximation property.

The standard example of this is the use of a piecewise constant
interpolation operator, $P$, for any standard discretization of the
Poisson problem.  If $\vect{v}$ is a smooth vector, then
$\|A\vect{v}\|_X$ will be small for many reasonable choices of $X$,
such as $X = D^{-1}$, where $D$ is the diagonal of the system matrix, $A$.  After
coarse-grid correction, $(I-\pi_A)\vect{v}$ will have jumps induced by
the piecewise-constant interpolation, so $\|(I-\pi_A)\vect{v}\|_A$
will be large, reflecting the high-frequency character of
$(I-\pi_A)\vect{v}$.  With this, the strong approximation property can
only be achieved with a large constant, $\beta$.  In contrast, since
$\|A(I-\pi_A)\vect{v}\|_X$ will also be large, the weak approximation
property can be fulfilled with a moderate constant, $\beta$.  As is
well-known, piecewise constant interpolation is sufficient for good
two-level convergence, but not multilevel, consistent with the
theoretical results.

Let $E_{MG}$ be the error-propagation matrix
for a $V(1,1)$-cycle with a full multigrid hierarchy. The resulting convergence
factor is bounded by the $A$-norm of $E_{MG}$, which can take the form
\[
\|E_{MG}\|_A  = 1 - \frac{1}{K_{MG}},
\]
for some $K_{MG} \geq 1$. Multigrid with an arbitrary number of levels can
be thought of as a recursive use of two-grid methods with inexact coarse-grid solves, which is
typically how convergence theory is formulated in the multilevel setting. 
The standard multilevel convergence result and some equivalent or
sufficient conditions are stated below.  In all cases, we assume that
the multigrid hierarchy is specified by matrices $A^{(k)}$ and
interpolation operators $P^{(k)}$, with the convention that $P^{(k)}$
is the interpolation operator from level $k+1$ to level $k$, and
$A^{(k+1)} = \left(P^{(k)}\right)^TA^{(k)}P^{(k)}$.  Furthermore, we
assume that on each level, a relaxation scheme is specified that
satisfies a consistent smoothing property,
\[
\left\|\left(I-\left(M^{(k)}\right)^{-T}A^{(k)}\right)\vect{v}^{(k)}\right\|_{A^{(k)}}^2
\leq \left\|\vect{v}^{(k)}\right\|_{A^{(k)}}^2 - \frac{\alpha}{\|A^{(k)}\|}\left\|A^{(k)}\vect{v}^{(k)}\right\|^2,
\]
for all vectors, $\vect{v}^{(k)}$, with $\alpha$ independent of $k$.

\begin{theorem}[Strong approximation property]
\label{th:sap}
If, for every $\mathbf{v}^{(k)}$, there exists a $\mathbf{v}^{(k+1)}$ such that
\begin{equation}
\left\|\mathbf{v}^{(k)} - P^{(k)}\mathbf{v}^{(k+1)}\right\|_{A^{(k)}}^2 \leq \frac{\beta}{\|A^{(k)}\|} \left\|A^{(k)}\vect{v}^{(k)}\right\|^2,\label{eq:sap}
\end{equation}
for some $\beta$ independent of $k$. Then the multilevel $V(1,1)$-cycle converges uniformly, and
$K_{MG} \leq \beta/\alpha$.
\end{theorem}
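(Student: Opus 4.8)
The plan is to reduce the multilevel claim to a level-by-level use of Theorem~\ref{thm:classical} and then to close a recursion over the levels of the hierarchy. First I would note that the strong approximation property \eqref{eq:sap}, stated in best-approximation form, immediately controls the $A^{(k)}$-orthogonal projection: since $\pi_{A^{(k)}}$ attains the minimal $A^{(k)}$-norm error over $\Ima(P^{(k)})$, the $\vect{v}^{(k+1)}$ guaranteed by \eqref{eq:sap} gives
\[
\left\|(I-\pi_{A^{(k)}})\vect{v}^{(k)}\right\|_{A^{(k)}}^2 \leq \left\|\vect{v}^{(k)}-P^{(k)}\vect{v}^{(k+1)}\right\|_{A^{(k)}}^2 \leq \frac{\beta}{\|A^{(k)}\|}\left\|A^{(k)}\vect{v}^{(k)}\right\|^2 .
\]
Choosing $X=\tfrac{1}{\|A^{(k)}\|}I$ in Theorem~\ref{thm:classical}, the consistent smoothing property furnishes the first hypothesis with constant $\alpha$ and the display above furnishes the strong approximation hypothesis with constant $\beta$, so the exact-coarse-solve two-grid method converges on every level with $\|E_{TG}^{(k)}\|_{A^{(k)}}^2\leq 1-\alpha/\beta$, uniformly in $k$.

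Second, I would pass from this exact two-grid bound to the recursively defined $V(1,1)$-cycle. The V-cycle error propagation $E_{MG}^{(k)}$ differs from $E_{TG}^{(k)}$ only in that the exact coarse inverse $(A^{(k+1)})^{-1}$ is replaced by the action of the coarse V-cycle, whose error propagation is $E_{MG}^{(k+1)}$; writing the inexact coarse correction as $P^{(k)}(I-E_{MG}^{(k+1)})(A^{(k+1)})^{-1}(P^{(k)})^TA^{(k)}$ exposes the exact projection $\pi_{A^{(k)}}$ together with a perturbation proportional to $E_{MG}^{(k+1)}$ and supported in $\Ima(P^{(k)})$. I would then induct on the number of levels: the base case is the coarsest grid, where the solve is exact and $E_{MG}=0$, and the inductive hypothesis is that the coarse V-cycle satisfies $K_{MG}^{(k+1)}\leq\beta/\alpha$.

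The inductive step is cleanest if phrased through the symmetric V-cycle preconditioner $B^{(k)}$ with $E_{MG}^{(k)}=I-(B^{(k)})^{-1}A^{(k)}$. Rather than bounding a sum of operator norms, which would lose a factor on each level, I would establish the two-sided spectral equivalence
\[
\vect{v}^TA^{(k)}\vect{v}\leq \vect{v}^TB^{(k)}\vect{v}\leq \frac{\beta}{\alpha}\,\vect{v}^TA^{(k)}\vect{v}
\]
and show that it is inherited from level $k+1$ to level $k$ with the \emph{same} constant. Since $E_{MG}^{(k)}$ is $A^{(k)}$-self-adjoint and $B^{(k)}\geq A^{(k)}$, the eigenvalues of $(B^{(k)})^{-1}A^{(k)}$ lie in $[\alpha/\beta,1]$, whence $\|E_{MG}^{(k)}\|_{A^{(k)}}=1-\lambda_{\min}((B^{(k)})^{-1}A^{(k)})=1-1/K_{MG}^{(k)}$ with $K_{MG}^{(k)}\leq\beta/\alpha$, which is exactly the claim.

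I expect the inductive step to be the main obstacle, specifically the verification that the inexact coarse solve does not inflate the constant. The delicate point is that the perturbation replacing $\pi_{A^{(k)}}$ with the inexact coarse correction is supported in $\Ima(P^{(k)})$, precisely where the strong approximation property gives no direct control; the argument must use that the smoothing property absorbs exactly the error component left uncontrolled by the approximation property, so that the single constant $\beta/\alpha$ propagates unchanged up the hierarchy. This interplay also explains why the \emph{strong} rather than the weak approximation property is required, and it is what makes the recursion close uniformly in the number of levels, following \cite{SFMcCormick_1985b,JWRuge_KStuben_1987a}.
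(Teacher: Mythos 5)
A preliminary point: the paper never proves Theorem~\ref{th:sap}; it states it as a classical result, deferring to \cite{JWRuge_KStuben_1987a} and to Theorem 5.6.1 and Chapter 6 of \cite{Vassilevski:2010vy}, so the comparison can only be with that standard argument---which is indeed the architecture you chose (best-approximation form of \eqref{eq:sap} controlling $\pi_k$, a uniform two-grid bound from Theorem~\ref{thm:classical} with $X=\|A^{(k)}\|^{-1}I$, then induction on levels through the inexact coarse solve using $A^{(k)}$-self-adjointness and positive semi-definiteness of $E_{MG}^{(k)}$). The gap is that the one step you defer \emph{is} the theorem, and your description of how it should close misattributes the mechanism. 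Writing $\bar{\vect{v}} = (I-M^{-1}A^{(k)})\vect{v}$ and $\bar{\vect{v}}_c = \bigl(A^{(k+1)}\bigr)^{-1}\bigl(P^{(k)}\bigr)^TA^{(k)}\bar{\vect{v}}$, the symmetric $V(1,1)$-cycle satisfies
\begin{align*}
\bigl\langle A^{(k)}E_{MG}^{(k)}\vect{v},\vect{v}\bigr\rangle
= \bigl\|(I-\pi_k)\bar{\vect{v}}\bigr\|_{A^{(k)}}^2
+ \bigl\langle A^{(k+1)}E_{MG}^{(k+1)}\bar{\vect{v}}_c,\bar{\vect{v}}_c\bigr\rangle,
\qquad \bigl\|\bar{\vect{v}}_c\bigr\|_{A^{(k+1)}}^2 = \bigl\|\pi_k\bar{\vect{v}}\bigr\|_{A^{(k)}}^2.
\end{align*}
So the perturbation supported in $\Ima(P^{(k)})$ is absorbed by the \emph{inductive hypothesis} $0\le E_{MG}^{(k+1)}\le\delta I$ in the $A^{(k+1)}$-inner product---not, as you claim, by the smoothing property. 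Smoothing is instead paired with the SAP applied to the \emph{pre-smoothed} error: $\|(I-\pi_k)\bar{\vect{v}}\|_{A^{(k)}}^2 \le \tfrac{\beta}{\|A^{(k)}\|}\|A^{(k)}\bar{\vect{v}}\|^2$ set against $\|\vect{v}\|_{A^{(k)}}^2-\|\bar{\vect{v}}\|_{A^{(k)}}^2 \ge \tfrac{\alpha}{\|A^{(k)}\|}\|A^{(k)}\bar{\vect{v}}\|^2$. That pairing is also the true reason the \emph{strong} property is required: what the smoother controls is the residual of $\bar{\vect{v}}$ itself, which the SAP (but not the WAP) matches.

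Two concrete consequences follow. First, the paper's uniform smoothing hypothesis is in ``input'' form, with $\|A^{(k)}\vect{v}\|^2$ for the same $\vect{v}$ being smoothed, whereas the pairing above needs the ``output'' form with $\|A^{(k)}\bar{\vect{v}}\|^2$; these are distinct assumptions, and the conversion (or assuming both forms, as in \cite{JWRuge_KStuben_1987a}) must appear explicitly in any complete proof. Second, your assertion that the equivalence $A^{(k)}\le B^{(k)}\le(\beta/\alpha)A^{(k)}$ is ``inherited with the same constant'' is exactly what the estimate, as you set it up, does not deliver: combining the display with the inductive bound gives $\langle A^{(k)}E_{MG}^{(k)}\vect{v},\vect{v}\rangle \le \delta\|\bar{\vect{v}}\|_{A^{(k)}}^2 + (1-\delta)\|(I-\pi_k)\bar{\vect{v}}\|_{A^{(k)}}^2$, and with the two ingredients just named the recursion closes exactly when $(1-\delta)\beta\le\delta\alpha$, i.e., at the fixed point $\delta=\beta/(\alpha+\beta)$. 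That certifies $K_{MG}\le 1+\beta/\alpha$, uniformly in the number of levels, but not the stated $K_{MG}\le\beta/\alpha$; recovering the sharper constant requires more careful bookkeeping along the lines of Theorem 5.6.1 of \cite{Vassilevski:2010vy}, not the one-line inheritance you propose. In short: the skeleton is the right one and matches the literature the paper cites, but the proposal neither executes nor correctly describes the inductive step, and executed naively it proves the theorem only with a weaker constant.
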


Note that the strong approximation property (SAP), is similar to the
multilevel generalization of the WAP of equation
\eqref{eq:wap_bestkind} when $X = \frac{1}{\|A\|}I$,
\[
\left\|\mathbf{v}^{(k)} - P^{(k)}\mathbf{v}^{(k+1)}\right\|^2 \leq \frac{\beta}{\|A^{(k)}\|} \left\|\vect{v}^{(k)}\right\|_{A^{(k)}}^2.
\]
In this form of the WAP, interpolation of an eigenvector, $\mathbf{v}^{(k)}$, must
be accurate in the $l^2$-norm to the order of its corresponding eigenvalue, with
constant $\frac{\beta}{\|A^{(k)}\|}$. A stronger statement is required by the SAP, namely
that interpolation of an eigenvector in the $A^{(k)}$-norm must be accurate to the order of its
corresponding eigenvalue. A
detailed look at the SAP can be found in Theorem 5.6.1
and Chapter 6 of\cite{Vassilevski:2010vy}. 

Two sufficient conditions for the strong approximation property are
stated below.  Both rely on the smoothing property stated above
holding uniformly across all levels.  To simplify notation, we define
$\pi_k =
P^{(k)}\left(A^{(k+1)}\right)^{-1}\left(P^{(k)}\right)^TA^{(k)}$ as
the $A^{(k)}$-orthogonal projection on level $k$ of the hierarchy, and
$Q_k = P^{(k)}\left(\left(P^{(k)}\right)^TP^{(k)}\right)^{-1}\left(P^{(k)}\right)^T$
as the $\ell^2$-orthogonal projection on level $k$ of the hierarchy.
\begin{corollary}[$l^2$-boundedness of $\pi_k$]
\label{cor:sap1}
If, for every $\mathbf{v}^{(k)}$, 
\begin{equation}
\left\|(I - \pi_k)\mathbf{v}^{(k)}\right\|^2 \leq \frac{\beta}{\|A^{(k)}\|^2} \left\|A^{(k)}v^{(k)}\right\|^2\label{eq:wap2},
\end{equation}
for some $\beta$ independent of $k$, then the multilevel $V(1,1)$-cycle converges uniformly, and $K_{MG} \leq \beta/\alpha$.
\end{corollary}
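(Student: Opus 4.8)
The plan is to show that the $\ell^2$-boundedness hypothesis \eqref{eq:wap2} implies the strong approximation property \eqref{eq:sap} on each level with the same constant $\beta$, after which the conclusion is immediate from Theorem \ref{th:sap} (which gives $K_{MG} \leq \beta/\alpha$).

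First I would dispose of the existential quantifier in the SAP by using the $A^{(k)}$-orthogonal projection. Since $\pi_k$ has range $\Ima(P^{(k)})$, for any $\mathbf{v}^{(k)}$ there is a coarse vector $\mathbf{v}^{(k+1)}$ with $P^{(k)}\mathbf{v}^{(k+1)} = \pi_k\mathbf{v}^{(k)}$; this choice gives $\|\mathbf{v}^{(k)} - P^{(k)}\mathbf{v}^{(k+1)}\|_{A^{(k)}}^2 = \|(I-\pi_k)\mathbf{v}^{(k)}\|_{A^{(k)}}^2$, so it suffices to bound the coarse-grid error in the $A^{(k)}$-norm by the right-hand side of \eqref{eq:sap}.

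The key step is to pass from the $A^{(k)}$-norm, in which the SAP is stated, to the $\ell^2$-norm, in which the hypothesis is stated, using the elementary spectral bound $\|\mathbf{w}\|_{A^{(k)}}^2 = \langle A^{(k)}\mathbf{w},\mathbf{w}\rangle \leq \|A^{(k)}\|\,\|\mathbf{w}\|^2$. Taking $\mathbf{w} = (I-\pi_k)\mathbf{v}^{(k)}$ and then invoking \eqref{eq:wap2},
\[
\left\|(I-\pi_k)\mathbf{v}^{(k)}\right\|_{A^{(k)}}^2 \leq \|A^{(k)}\|\,\frac{\beta}{\|A^{(k)}\|^2}\left\|A^{(k)}\mathbf{v}^{(k)}\right\|^2 = \frac{\beta}{\|A^{(k)}\|}\left\|A^{(k)}\mathbf{v}^{(k)}\right\|^2,
\]
which is exactly \eqref{eq:sap}, uniformly in $k$. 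Theorem \ref{th:sap} then yields uniform convergence of the $V(1,1)$-cycle with $K_{MG} \leq \beta/\alpha$.

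There is no genuine obstacle here: the entire argument is a one-line norm estimate applied on each level. The only thing that requires care is the bookkeeping of the powers of $\|A^{(k)}\|$ --- the extra factor of $\|A^{(k)}\|$ in the denominator of \eqref{eq:wap2}, compared with the single power in \eqref{eq:sap}, is precisely what is absorbed by the crude bound $\|\mathbf{w}\|_{A^{(k)}}^2 \leq \|A^{(k)}\|\,\|\mathbf{w}\|^2$. It is also worth remarking that this conversion discards information (it replaces $A^{(k)}$ by its largest eigenvalue), so \eqref{eq:wap2} is in fact a \emph{stronger} requirement than the SAP itself, confirming that the corollary is a genuine sufficient condition.
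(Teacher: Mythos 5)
Your proof is correct and follows exactly the route the paper intends: the paper leaves Corollary \ref{cor:sap1} unproven but proves the companion Corollary \ref{cor:sap2} by the identical one-line argument --- choose the coarse vector realizing the projection, bound $\|\cdot\|_{A^{(k)}}^2 \leq \|A^{(k)}\|\,\|\cdot\|^2$ to absorb the extra power of $\|A^{(k)}\|$, and invoke Theorem \ref{th:sap}. Your closing remark that \eqref{eq:wap2} is strictly stronger than the SAP (it costs a condition-number factor to go the other way) is a correct and worthwhile observation beyond what the paper states.
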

\begin{corollary}[WAP$(A^2)$]
\label{cor:sap2}
If, for every $\mathbf{v}^{(k)}$, 
\begin{align}
\left\|(I - Q_k)\mathbf{v}^{(k)}\right\|^2 & \leq \frac{\beta}{\|A^{(k)}\|^2} \left\|A^{(k)}v^{(k)}\right\|^2,
\end{align}
for some $\beta$ independent of $k$, then the SAP holds with constant
$\beta$ and $K_{MG} \leq \beta/\alpha$.
\end{corollary}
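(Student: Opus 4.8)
The plan is to show that the hypothesis forces the strong approximation property of Theorem~\ref{th:sap} to hold with the same constant $\beta$; the claimed bound $K_{MG}\leq\beta/\alpha$ then follows at once by invoking that theorem, whose uniform smoothing-property hypothesis is assumed throughout this subsection. The SAP only requires, for each $\mathbf{v}^{(k)}$, \emph{some} coarse vector $\mathbf{v}^{(k+1)}$ making the interpolation error small in the $A^{(k)}$-norm, so I would simply take the $A^{(k)}$-orthogonal best approximation, $P^{(k)}\mathbf{v}^{(k+1)} = \pi_k\mathbf{v}^{(k)}$ (equivalently $\mathbf{v}^{(k+1)} = (A^{(k+1)})^{-1}(P^{(k)})^TA^{(k)}\mathbf{v}^{(k)}$). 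It then suffices to bound $\|(I-\pi_k)\mathbf{v}^{(k)}\|_{A^{(k)}}^2$ above by $\frac{\beta}{\|A^{(k)}\|}\|A^{(k)}\mathbf{v}^{(k)}\|^2$.

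The core of the argument is a short three-step chain of inequalities. First, since $\pi_k$ is the $A^{(k)}$-orthogonal projection onto $\Ima(P^{(k)})$ and $Q_k\mathbf{v}^{(k)}$ already lies in $\Ima(P^{(k)})$, the best-approximation property of orthogonal projections gives $\|(I-\pi_k)\mathbf{v}^{(k)}\|_{A^{(k)}}\leq\|(I-Q_k)\mathbf{v}^{(k)}\|_{A^{(k)}}$. Next I would pass from the $A^{(k)}$-norm to the $\ell^2$-norm via $\|\mathbf{w}\|_{A^{(k)}}^2=\langle A^{(k)}\mathbf{w},\mathbf{w}\rangle\leq\|A^{(k)}\|\,\|\mathbf{w}\|^2$, applied to $\mathbf{w}=(I-Q_k)\mathbf{v}^{(k)}$. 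Finally, the hypothesis WAP$(A^2)$ controls $\|(I-Q_k)\mathbf{v}^{(k)}\|^2$ by $\frac{\beta}{\|A^{(k)}\|^2}\|A^{(k)}\mathbf{v}^{(k)}\|^2$; combining the last two bounds leaves exactly $\frac{\beta}{\|A^{(k)}\|}\|A^{(k)}\mathbf{v}^{(k)}\|^2$, which is precisely the SAP estimate \eqref{eq:sap} with constant $\beta$.

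I do not expect a genuine obstacle here: the whole argument turns on the single observation that $\pi_k$ and $Q_k$ project onto the \emph{same} subspace $\Ima(P^{(k)})$, which is what licenses trading the optimal-but-inaccessible $A^{(k)}$-projection error for the hypothesized $\ell^2$-projection error. The only point requiring care is the bookkeeping of the powers of $\|A^{(k)}\|$, and the reason the single factor from the norm conversion cancels exactly one of the two factors in the denominator of the hypothesis is precisely why the corollary reproduces the SAP constant $\beta$ unchanged---indeed, this is why the hypothesis is stated with $\|A^{(k)}\|^2$ in the denominator. Since $\beta$ is independent of $k$, the SAP holds uniformly across the hierarchy and Theorem~\ref{th:sap} closes the argument.
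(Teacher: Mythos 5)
Your proof is correct and takes essentially the same route as the paper: the paper simply uses the $\ell^2$-projection itself as the coarse witness, $\hat{\mathbf{v}}^{(k+1)} = ((P^{(k)})^TP^{(k)})^{-1}(P^{(k)})^T\mathbf{v}^{(k)}$ so that $P^{(k)}\hat{\mathbf{v}}^{(k+1)} = Q_k\mathbf{v}^{(k)}$, then applies the identical norm conversion $\|\mathbf{w}\|_{A^{(k)}}^2 \leq \|A^{(k)}\|\,\|\mathbf{w}\|^2$ followed by the hypothesis, cancelling one power of $\|A^{(k)}\|$ exactly as in your bookkeeping. Your extra opening step---passing through the $A^{(k)}$-best approximation $\pi_k\mathbf{v}^{(k)}$ and invoking its optimality against $Q_k\mathbf{v}^{(k)}$---is valid but unnecessary, since the SAP only requires \emph{some} coarse vector and $Q_k\mathbf{v}^{(k)}$ already suffices.
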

\begin{proof}
For $\hat{\mathbf{v}}^{(k+1)} = \left(\left(P^{(k)}\right)^TP^{(k)}\right)^{-1}\left(P^{(k)}\right)^T\mathbf{v}^{(k)}$,
\[
\left\|\mathbf{v}^{(k)}-P^{(k)}\hat{\mathbf{v}}^{(k+1)}\right\|_{A^{(k)}}^2 \leq \|A^{(k)}\|\left\|\mathbf{v}^{(k)}-P^{(k)}\hat{\mathbf{v}}^{(k+1)}\right\|^2 \leq \frac{\beta}{\|A^{(k)}\|} \left\|A^{(k)}v^{(k)}\right\|^2.
\]
\end{proof}

\subsection{``Optimal'' and ``ideal'' interpolation}

Returning to the two-level case, alongside bounds on two-grid convergence factors, specific interpolation operators have
been derived as the best interpolation operator in certain contexts.
Let $P = \begin{bmatrix}W\\I\end{bmatrix}$ and $R = \begin{bmatrix}0&I\end{bmatrix}$,
and consider relaxing the numerator of \eqref{eq:wap1} from the $\widetilde{M}$-norm to the following problem:
\begin{equation}
\min_P \max_{\mathbf{v}} \frac{\| ( I - PR)\mathbf{v}\|^2}{\|\mathbf{v}\|^2_A} = \begin{bmatrix} -A_{ff}^{-1}A_{fc} \\ I\end{bmatrix}
	:= P_{ideal}, \label{eq:wap_ideal}
\end{equation}
where $P_{ideal}$ is so-called ``ideal interpolation'' \cite{Falgout:2004cs}.
Due to the inverse of $A_{ff}$, $P_{ideal}$ is often a difficult
operator to compute directly and may be dense, neither of which
are compatible with the goals of AMG (see Section \ref{sec:practice}). However,  denoting the graph distance
between nodes $i$ and $j$ in $A_{ff}$ by  $|i-j|_G$, then the
following decay property is well-known:
\[
[A_{ff}^{-1}]_{ij} \leq C q^{|i-j|_G-1},
\]
for some constant $C$ and $q < 1$, where $q\approx \frac{\sqrt{\kappa}-1}{\sqrt{\kappa}+1}$, for condition number, $\kappa$, of
$A_{ff}$ \cite{Brannick:2007fb,DemkoMossSmith}. Thus, for sparse,
well-conditioned $A_{ff}$ (as is expected with
a proper choice of coarse grid), coefficients of $A_{ff}^{-1}$ decay exponentially fast away from the
diagonal. Under this assumption, $A_{ff}^{-1}$ can be approximated well with a sparse matrix, at least in a Frobenius sense.
Approaches to approximating $P_{ideal}$ can be found in \cite{Manteuffel:2016vd,Brannick:2007fb},
each of which contain some variation of the following result:
\begin{lemma}[Ideal interpolation]\label{lem:ideal}
Let $P_{ideal}$ be as in \eqref{eq:wap_ideal} and let $P$ take the form $P = \begin{bmatrix}W\\I\end{bmatrix}$, restricted to a fixed
nonzero sparsity pattern. Then minimizing the difference between columns of $P$ and $P_{ideal}$ in the $A$-norm is equivalent
to minimizing each column of $P$ in the $A$-norm. Furthermore, the solution of this minimization is unique. 
\end{lemma}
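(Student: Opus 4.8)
The plan is to exploit the block structure $A = \begin{bmatrix} A_{ff} & A_{fc} \\ A_{cf} & A_{cc}\end{bmatrix}$ together with the fact that both functionals decouple over the columns of $P$. Writing $\|P-P_{ideal}\|_A^2 = \tr\big((P-P_{ideal})^TA(P-P_{ideal})\big) = \sum_j \|\mathbf{p}_j - \mathbf{p}_j^{ideal}\|_A^2$ and, similarly, $\tr(P^TAP) = \sum_j\|\mathbf{p}_j\|_A^2$, where $\mathbf{p}_j$ and $\mathbf{p}_j^{ideal}$ are the $j$th columns, it suffices to treat each column independently, since the prescribed sparsity pattern constrains the entries of one column without coupling distinct columns. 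For a fixed column $j$, the lower (C-point) block of $\mathbf{p}_j$ is pinned to the canonical vector $\mathbf{e}_j$ by the form $P=\begin{bmatrix}W\\I\end{bmatrix}$, and the only free parameters are the nonzeros of $W$ permitted in column $j$; I would collect these in a vector $\mathbf{c}_j$ and write $\mathbf{p}_j = \begin{bmatrix} E_j\mathbf{c}_j \\ \mathbf{e}_j\end{bmatrix}$ for the fixed inclusion matrix $E_j$ selecting the allowed F-point rows.

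The key step is a computation showing that the two objectives differ only by an additive constant on the feasible set. First I would verify
\[
AP_{ideal} = \begin{bmatrix} A_{ff} & A_{fc} \\ A_{cf} & A_{cc}\end{bmatrix}\begin{bmatrix} -A_{ff}^{-1}A_{fc} \\ I\end{bmatrix} = \begin{bmatrix} 0 \\ A_{cc}-A_{cf}A_{ff}^{-1}A_{fc}\end{bmatrix},
\]
so that $A\mathbf{p}_j^{ideal}$ is supported entirely on the C-points. Expanding $\|\mathbf{p}_j-\mathbf{p}_j^{ideal}\|_A^2 = \|\mathbf{p}_j\|_A^2 - 2\langle A\mathbf{p}_j^{ideal},\mathbf{p}_j\rangle + \|\mathbf{p}_j^{ideal}\|_A^2$, the final term is independent of $\mathbf{c}_j$, and the cross term $\langle A\mathbf{p}_j^{ideal},\mathbf{p}_j\rangle$ — because $A\mathbf{p}_j^{ideal}$ vanishes on the F-points while the C-block of $\mathbf{p}_j$ is the fixed vector $\mathbf{e}_j$ regardless of $\mathbf{c}_j$ — equals the $(j,j)$ entry of the Schur complement $A_{cc}-A_{cf}A_{ff}^{-1}A_{fc}$ and is therefore also independent of $\mathbf{c}_j$. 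Hence the two objectives agree up to constants over the admissible $\mathbf{c}_j$, so minimizing $\|\mathbf{p}_j - \mathbf{p}_j^{ideal}\|_A^2$ and minimizing $\|\mathbf{p}_j\|_A^2$ produce the same minimizer, establishing the claimed equivalence.

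For uniqueness, I would observe that each column objective, as a function of $\mathbf{c}_j$, is the quadratic $\mathbf{c}_j^T(E_j^TA_{ff}E_j)\mathbf{c}_j + 2\mathbf{c}_j^TE_j^TA_{fc}\mathbf{e}_j + \mathbf{e}_j^TA_{cc}\mathbf{e}_j$. Since $A$ is SPD its principal submatrix $A_{ff}$ is SPD, and $E_j$ has full column rank (its columns are distinct canonical vectors), so $E_j^TA_{ff}E_j$ is SPD; the quadratic is thus strictly convex and attains a unique minimizer, characterized by the normal equations $E_j^TA_{ff}E_j\mathbf{c}_j = -E_j^TA_{fc}\mathbf{e}_j$. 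I expect the only genuinely substantive step to be the identity $AP_{ideal} = \begin{bmatrix} 0 \\ A_{cc}-A_{cf}A_{ff}^{-1}A_{fc}\end{bmatrix}$ and the resulting observation that the cross term is constant on the feasible set; once this is in hand, both the equivalence and the uniqueness follow from standard strict-convexity arguments.
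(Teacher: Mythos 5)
Your proof is correct, and it is essentially the canonical argument: the paper states Lemma \ref{lem:ideal} without proof, deferring to \cite{Manteuffel:2016vd,Brannick:2007fb}, and your reasoning matches what is done there. The key step is exactly the right one --- since $AP_{ideal}$ has zero F-block, the cross term $\langle A\mathbf{p}_j^{ideal},\mathbf{p}_j\rangle = (S_A)_{jj}$ is constant on the feasible set (as is $\|\mathbf{p}_j^{ideal}\|_A^2 = (S_A)_{jj}$), so the two objectives differ column-wise only by constants, giving $\tr\big((P-P_{ideal})^TA(P-P_{ideal})\big) = \tr(P^TAP) - \tr(S_A)$; your uniqueness argument via the SPD Hessian $E_j^TA_{ff}E_j$ (full column rank of $E_j$, $A_{ff}$ an SPD principal submatrix of SPD $A$) is likewise sound, with the only case left implicit being a column whose allowed F-set is empty, where the minimizer is vacuously unique.
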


However, considering that \eqref{eq:wap_ideal} does not provide a sharp bound on convergence, ideal interpolation typically does
not provide optimal (two-grid) convergence factors over all $P$. The optimal $P$ with respect to two-grid convergence
is given in the following lemma \cite[Lemma 1]{brannick2017optimal}.
\begin{lemma}[Optimal interpolation]\label{lem:opt}
Let $0<\lambda_1\leq ... \leq \lambda_n$ and $\mathbf{v}_1,...,\mathbf{v}_n$
denote the eigenvalues and eigenvectors, respectively, of the generalized
eigenvalue problem
\[
A\mathbf{v} = \lambda\widetilde{M}\mathbf{v}.
\]
Then the minimal convergence rate of the two-grid method $\|E_{TG}(P)\|_A$
over all $P$ with dim$(P) = n_c$ is given by
\[
\|E_{TG}(P_{opt})\|_A^2 = 1 - \lambda_{n_c+1},
\]
with corresponding optimal interpolation matrix given by
\[
P_{opt} = \begin{bmatrix} \mathbf{v}_1 & ... & \mathbf{v}_{n_c}\end{bmatrix}.
\]
\end{lemma}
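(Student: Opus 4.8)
The plan is to reduce the minimization over $P$ to the sharp two-grid characterization of Theorem~\ref{th:wap} and then to recognize the resulting object as a Courant--Fischer--type min--max over $n_c$-dimensional subspaces. Since $\pi_{\widetilde M}$, and hence the quotient in \eqref{eq:wap1}, depends on $P$ only through $\Ima(P)$, minimizing $\|E_{TG}(P)\|_A = 1 - 1/K_{TG}(P)$ over all $P$ of rank $n_c$ is equivalent to
\[
\min_{\dim\mathcal P = n_c}\ \max_{\mathbf v\neq\mathbf 0}\ \frac{\|(I-\pi_{\widetilde M})\mathbf v\|_{\widetilde M}^2}{\|\mathbf v\|_A^2},
\]
where $\pi_{\widetilde M}$ is the $\widetilde M$-orthogonal projection onto $\mathcal P$. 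Because $\|E_{TG}\|_A$ is increasing in $K_{TG}$, minimizing the convergence factor is the same as minimizing this max, so it suffices to identify the subspace $\mathcal P$ attaining its smallest possible value.

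First I would move to the generalized eigenbasis. Normalizing the eigenvectors of $A\mathbf v = \lambda\widetilde M\mathbf v$ so that $\mathbf v_i^T\widetilde M\mathbf v_j = \delta_{ij}$ gives $\mathbf v_i^TA\mathbf v_j = \lambda_i\delta_{ij}$; writing $\mathbf v = \sum_i c_i\mathbf v_i$ then yields $\|\mathbf v\|_{\widetilde M}^2 = \sum_i c_i^2$ and $\|\mathbf v\|_A^2 = \sum_i\lambda_i c_i^2$. For the candidate $\mathcal P = \Ima(P_{opt}) = \operatorname{span}\{\mathbf v_1,\dots,\mathbf v_{n_c}\}$, the $\widetilde M$-orthonormality makes $\pi_{\widetilde M}$ act as truncation to the first $n_c$ coordinates, so $\|(I-\pi_{\widetilde M})\mathbf v\|_{\widetilde M}^2 = \sum_{i>n_c}c_i^2$ and the quotient becomes $\big(\sum_{i>n_c}c_i^2\big)/\big(\sum_i\lambda_i c_i^2\big)$. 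Taking $c_i=0$ for $i\le n_c$ only shrinks the denominator without affecting the numerator, and among the remaining modes the quotient is largest for the smallest eigenvalue; hence the max is attained at $\mathbf v = \mathbf v_{n_c+1}$ and equals $1/\lambda_{n_c+1}$. This gives the upper bound $K_{TG}(P_{opt}) \le 1/\lambda_{n_c+1}$ and the asserted optimal convergence factor.

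The main obstacle is the matching lower bound: showing that no $n_c$-dimensional $\mathcal P$ can do better than $1/\lambda_{n_c+1}$. Here I would use a dimension count. Given any $\mathcal P$ with $\dim\mathcal P = n_c$, its $\widetilde M$-orthogonal complement has dimension $n - n_c$, so it must intersect the $(n_c+1)$-dimensional space $\operatorname{span}\{\mathbf v_1,\dots,\mathbf v_{n_c+1}\}$ in a nonzero vector $\mathbf w = \sum_{i\le n_c+1}c_i\mathbf v_i$. For this $\mathbf w$ we have $\pi_{\widetilde M}\mathbf w = 0$, so the quotient equals $\|\mathbf w\|_{\widetilde M}^2/\|\mathbf w\|_A^2 = \big(\sum_{i\le n_c+1}c_i^2\big)/\big(\sum_{i\le n_c+1}\lambda_i c_i^2\big) \ge 1/\lambda_{n_c+1}$, using $\lambda_i \le \lambda_{n_c+1}$ on the support of $\mathbf w$. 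Thus every $\mathcal P$ yields a max of at least $1/\lambda_{n_c+1}$, so both $K_{TG}$ and $\|E_{TG}\|_A$ are minimized exactly at $\mathcal P = \operatorname{span}\{\mathbf v_1,\dots,\mathbf v_{n_c}\}$, i.e.\ at $P = P_{opt}$. The only technical care needed is to verify that the intersection argument does produce a vector $\widetilde M$-orthogonal to all of $\mathcal P$ (so that the projection vanishes) and that the eigenbasis expansion is valid, both of which follow from the simultaneous $A$- and $\widetilde M$-diagonalization guaranteed by the symmetric positive definiteness of $A$ and $\widetilde M$.
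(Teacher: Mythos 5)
Your argument is correct, and it is worth noting that the paper itself does not prove this lemma at all --- it is quoted from \cite[Lemma 1]{brannick2017optimal} --- so there is no in-paper proof to match; what you have written is essentially the canonical proof of the cited result. Your reduction is sound: by Theorem~\ref{th:wap}, $\|E_{TG}\|_A = 1 - 1/K_{TG}(P)$ with $K_{TG}$ as in \eqref{eq:wap1} depending on $P$ only through $\Ima(P)$, and your two halves are exactly a Courant--Fischer argument in the $\widetilde{M}$-inner product: the upper bound by truncation in the $\widetilde{M}$-orthonormal generalized eigenbasis (where $P_{opt}^T\widetilde{M}P_{opt}=I$ makes $\pi_{\widetilde{M}}$ coordinate truncation), and the lower bound by the dimension count $(n-n_c)+(n_c+1)>n$, which produces a nonzero $\mathbf{w}\in\Ima(P)^{\perp_{\widetilde{M}}}\cap\operatorname{span}\{\mathbf{v}_1,\dots,\mathbf{v}_{n_c+1}\}$ with $\pi_{\widetilde{M}}\mathbf{w}=0$ and Rayleigh quotient at least $1/\lambda_{n_c+1}$. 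Both steps check out, including the implicit fact that $\lambda_{n_c+1}\leq 1$ (eigenvalues of $\widetilde{M}^{-1}A$ lie in $(0,1]$ for an $A$-convergent smoother), so $K_{TG}\geq 1$ and the map $K\mapsto 1-1/K$ is monotone as you use it.

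One point you should tighten: the statement asserts $\|E_{TG}(P_{opt})\|_A^2 = 1-\lambda_{n_c+1}$ with a \emph{square}, whereas Theorem~\ref{th:wap} gives $\|E_{TG}\|_A = 1-1/K_{TG}$ without one, and your proof silently conflates the two. The discrepancy is a convention mismatch within the paper itself: the squared version holds when $E_{TG}(P)$ denotes the one-sided operator, e.g.\ $(I-\pi_A)(I-M^{-1}A)$, via the identity $\|(I-M^{-T}A)(I-\pi_A)(I-M^{-1}A)\|_A = \|(I-\pi_A)(I-M^{-1}A)\|_A^2$ from \cite{SFMcCormick_1984b}, while your chain of equalities yields $1-\lambda_{n_c+1}$ for the symmetrized two-sided cycle. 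So you should state which error propagator you take $E_{TG}(P)$ to be, so that the exponent lands in the right place; the substance of the optimality argument is unaffected. A second, cosmetic remark: your proof shows $P_{opt}$ \emph{attains} the minimum (as does any $P$ with the same range, and the minimizing range need not be unique if $\lambda_{n_c}=\lambda_{n_c+1}$), which is all the lemma claims, so no repair is needed there.
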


Although results in \cite{brannick2017optimal} suggest that, at times, a sparse approximation to $P_{\textnormal{opt}}$
may be feasible, it is certainly more difficult to develop a cheap, sparse approximation to $P_{\textnormal{opt}}$ compared with
$P_{\textnormal{ideal}}$. That being said, Lemma \ref{lem:opt} does corroborate the general AMG approach of including
eigenvectors of $A$ (actually of $\widetilde{M}^{-1}A$) associated with small eigenvalues in $\Ima(P)$. In fact, it follows
from Lemma \ref{lem:opt} that if the first $n_c+1$ eigenvalues of $A\mathbf{v} = \lambda\widetilde{M}\mathbf{v}$ 
are all approximately zero, AMG \textit{cannot} achieve strong convergence factors. This highlights the importance of
the distribution of eigenvalues on the performance of AMG. 

\section{Interpolation in practice}\label{sec:practice}

AMG is a popular solver largely because of its linear complexity in
the setup and solve phase.  Let $A$ be SPD and consider forming a multigrid hierarchy to solve $A\mathbf{x} = \mathbf{b}$. Interpolation operators in
AMG methods are often (implicitly) constructed with the goal of
controlling or minimizing some functional with a theoretical
relation to convergence, such as \eqref{eq:wap1}, \eqref{eq:sap}, \eqref{eq:wap2}, or \eqref{eq:wap_ideal}. However, there
are two important factors that must be considered in practice and are generally absent from theory -- (i)  the process used to form
interpolation operators must remain linear in complexity in keeping
with the desired $O(n)$ total cost for AMG methods, and (ii) interpolation operators must
remain sparse in order to construct a sparse coarse-grid matrix that
can be used in a recursive process. 

These constraints can
prove difficult to achieve when designing AMG methods, and make
approximating some of the bounds in Section \ref{sec:theory}
more tractable than others. In particular, the operators used in the convergence theory, such as $\pi_A$, $P_{ideal}$, and $P_{opt}$, generally cannot
be easily computed in any practical setting. Furthermore, these convergence results are typically required to hold for all $\mathbf{v}$ or, equivalently, for some basis for the space
such as the eigenvectors of $A$. Constructing interpolation or coarsening based on a full basis of vectors is generally not tractable in linear
complexity and, thus, two forms of approximation are often used, (i) work with a candidate set of $k$ vectors, where
$k \ll n$, or (ii) work in an operator norm, which is a supremum over all vectors. 

The first approach is to directly satisfy conditions of a theorem but
only for a set of candidate vectors of dimension $k\ll n$. This
is a standard approach for satisfying the WAP or SAP, and classical
AMG \cite{JWRuge_KStuben_1987a} can be viewed as doing this for only
the constant vector, while smoothed aggregation (SA) \cite{Van:2001bw} may
use a larger basis, such as the rigid-body modes for elasticity
problems.  Most adaptive multigrid methods \cite{DAmbra:2013iwa,
  Brezina:2004eh, Brandt:2011hb} can also be viewed in this way, where
the candidate vectors arise from the adaptive process.  As in \eqref{eq:wap1}, the (two-grid) convergence rate is bounded by the
maximum of $K_{TG}$ over all $\mathbf{v} \neq \mathbf{0}$. Noting the denominator of $\|\mathbf{v}\|_A^2$,  the maximum will generally occur for
$\mathbf{v}$ associated with small eigenvalues of $A$, where
$\|\mathbf{v}\|_A^2$ is very small compared to $\|\mathbf{v}\|_{\widetilde{M}}^2$. For differential operators,
it is common to have a zero or near-zero row sum, making the constant a good representation of low-energy modes. Developing and
using additional candidate vectors is the basis for adaptive approaches, which are designed for difficult linear systems beyond the scope of
classical SA or AMG \cite{Brandt:2011hb, Brezina:2004eh,DAmbra:2013iwa}. In such solvers, an adaptive process is used to develop
a set of target vectors representative of low-energy modes of $A$. Interpolation is then constrained to interpolate these
modes either exactly or nearly so, and the process is repeated on coarse grids.

An alternative approach is to formulate the
minimization over all vectors $\mathbf{v}\neq\mathbf{0}$. Consider the
energy-stability constraint in Lemma \ref{lem:2g_stability},
based on controlling $\|PR\|_A^2$. As an induced $A$-norm, $\|PR\|_A$
is defined via a supremum over all $\mathbf{v}\neq\mathbf{0}$. However, $\|PR\|_A$
can be bounded using the Frobenius norm, which gives an indirect
approach to bounding $\|PR\|_A$. Let $R
= \begin{bmatrix}0&I\end{bmatrix}$, then
\[
\|PR\|_A^2 = \|A^{\frac{1}{2}}PRA^{-\frac{1}{2}}\|^2 \leq \|A^{\frac{1}{2}}PRA^{-\frac{1}{2}}\|_F^2 = \tr(P^TAPS_A),
\]
where $S_A^{-1}:= RA^{-1}R^T = (A_{cc} - A_{cf}A_{ff}^{-1}A_{fc})^{-1}$,
is the inverse of the Schur complement of $A$ in $A_{cc}$. Although an interesting
equivalence, the Schur complement is difficult to form in practice. A more
tractable approach is obtained by using an intermediate bound,
\[
\|PR\|_A^2 \leq \|A^{-1}\|\|A^{\frac{1}{2}}PR\|^2 \leq \|A^{-1}\|\|A^{\frac{1}{2}}PR\|_F^2 = \|A^{-1}\|\tr(R^TP^TAPR).
\]
Given the form of $R = (\mathbf{0}, I)$, this simplifies to
\begin{equation}
\|PR\|_A^2 \leq \|A^{-1}\|\tr(P^TAP). \label{eq:ainv}
\end{equation}
Minimizing $\tr(P^TAP)$ was proposed in this form in \cite{Brannick:2007fb}, and is equivalent to minimizing columns of $P$
in the $A$-norm. This approach has been used in smoothed aggregation (SA) \cite{Van:2001bw}, root-node AMG \cite{Manteuffel:2016vd,
Schroder:2012di}, and the general energy-minimization framework proposed in \cite{Olson:2011fg}. Recall from Lemma \ref{lem:ideal} that
minimizing energy in columns of $P$ is also equivalent to minimizing the difference between columns of $P$ and $P_{ideal}$ in the $A$-norm. 

It is worth considering the leading constant in \eqref{eq:ainv}, $\|A^{-1}\| = \frac{1}{\lambda_{\text{min}}(A)}$, as this is likely large and
could lead to a poor bound on $\|PR\|_A^2$. Note that the energy constraint in Lemma \ref{lem:2g_stability} can also be formulated as
\[
\mathbf{v}^TR^TP^TAPR\mathbf{v} \leq \eta \mathbf{v}^TA\mathbf{v} \hspace{3ex}
\Longleftrightarrow \hspace{3ex} \mathbf{v}_c^TA_c\mathbf{v}_c \leq \eta\mathbf{v}^TA\mathbf{v},
\]
for all vectors $\mathbf{v}$ with $\mathbf{v}_c = R\mathbf{v}$
\cite[Theorem 5.2]{Falgout:2005hm}. The factor of $\|A^{-1}\|$ in
\eqref{eq:ainv} accounts for the possibility that $P$ is chosen so
that $\mathbf{v}_c^TA_c\mathbf{v}_c$ is an $O(1)$ quantity when
$\mathbf{v}$ corresponds to the smallest eigenvalue of $A$
(corresponding to a bad choice of $P$). In practice, we make
the heuristic assumption that the choice of $P$ will not be so bad
and, thus, minimizing $\tr(P^TAP) =
\sum_i \lambda_i(A_c)$ is an effective way to control $\|PR\|_A$.

\section{Trace-minimization}\label{sec:functional}

As discussed in Section \ref{sec:practice} and can be seen in other AMG methods, AMG interpolation operators are often constructed
based on some combination of ensuring that a given set of candidate
vectors is interpolated exactly, while ensuring energy stability of the
coarse-grid operator. In this direction, we now propose to form $P$ through minimizing a general weighted functional combining these two approaches,
\begin{equation}
\mathcal{G}(P) = (1-\tau)\frac{\|(I - PR)\mathbf{v}\|_{\widetilde{M}}^2}{\|\mathbf{v}\|_A^2} + \tau \tr(P^TAP),
\label{eq:gen_func0}
\end{equation}
for $\tau\in[0,1)$ and candidate vector $\mathbf{v}$. If multiple candidate vectors, $\{\mathbf{v}_i\}$, are available a priori, for example,
the rigid body modes in elasticity, then we minimize over the maximum
$\mathbf{w}\in$ Span$\{\mathbf{v}_i\}$:
\[
\mathcal{G}(P) =
(1-\tau)\max_{\mathbf{w}\in\text{Span}\{\mathbf{v}_i\}\setminus\{\mathbf{0}\}}\frac{\|(I - PR)\mathbf{w}\|_{\widetilde{M}}^2}{\|\mathbf{w}\|_A^2} + \tau \tr(P^TAP),
\]
This is a complementary
approach, focusing on achieving accurate interpolation of the
low-energy modes in the candidate set as well as energy stability on the coarse grid. It is also complementary
in the sense that the first term is defined over a candidate set of vectors, $\{\mathbf{v}_i\}$, while the second term is defined over $P$,
and should improve interpolation regardless of the provided candidate vectors. 

\begin{linenomath}
Let $P$ take the form $P = \begin{bmatrix}W\\I\end{bmatrix}$, and consider minimizing \eqref{eq:gen_func0}. Define a set of $n_B$
candidate vectors as $A$-orthonormalized columns of a matrix $B = \begin{bmatrix}B_f\\B_c\end{bmatrix}$, and let $X\simeq\widetilde{M}$ as in \eqref{eq:spec_equiv}.
Then, consider minimizing $K_{TG}$ from \eqref{eq:wap1}, restricted to
unit linear combinations of $\mathbf{v}\in \mathcal{B} =
\{\mathbf{w}\in\Ima(B)\mid \|\mathbf{w}\|_A=1\}$:
\begin{align}
\max_{\mathbf{v\in\mathcal{B}}} \|(I - \pi_{\widetilde{M}})\mathbf{v}\|_{\widetilde{M}}^2 & \leq 
	c_2\max_{\mathbf{v\in\mathcal{B}}}\|(I - \pi_X)\mathbf{v}\|_{X}^2 \nonumber\\
& \leq c_2 \max_{\mathbf{v\in\mathcal{B}}} \|(I - PR)\mathbf{v}\|_{X}^2 \nonumber\\
& = c_2 \max_{\mathbf{v\in\mathcal{B}}} \Big\langle X \begin{bmatrix} \mathbf{v_f} - W\mathbf{v_c} \\ 0 \end{bmatrix},
	\begin{bmatrix} \mathbf{v_f} - W\mathbf{v_c} \\ 0 \end{bmatrix} \Big\rangle \nonumber\\
& = c_2 \max_{\mathbf{v\in\mathcal{B}}} \Big\langle X_{ff}W\mathbf{v_c}, W\mathbf{v_c}-2\mathbf{v}_f \Big\rangle + 
	c_2\|\mathbf{v_f}\|_{X_{ff}}^2 \nonumber\\
& \leq c_2 \Big\langle X_{ff}WB_c, WB_c-2B_f \Big\rangle_F + c_2\|B_f\|_{X_{ff}}^2 \nonumber\\
& = c_2\Big\langle X_{ff}WB_cB_c^T, W\Big\rangle_F - 2c_2\Big\langle X_{ff}W, B_fB_c^T\Big\rangle_F + c_2\|B_f\|_{X_{ff}}^2. \label{eq:term1}
\end{align}
This approximates the WAP in the $X$-norm using an $l^2$-projection onto $\Ima(P)$ (as opposed to the optimal $\pi_{\widetilde{M}}$-orthogonal
projection). Recall the second term in \eqref{eq:gen_func0} corresponds to minimizing the columns of $P$ in the $A$-norm. Expanding
$\tr(P^TAP)$ gives
\begin{align}
\tr(P^TAP) & = \tr\bigg(\begin{bmatrix}W^T & I\end{bmatrix} \begin{bmatrix} A_{ff} & A_{fc} \\ A_{cf} & A_{cc}\end{bmatrix} \begin{bmatrix} W \\ I\end{bmatrix}\bigg) \nonumber\\
	& = \tr(W^TA_{ff}W) + 2\tr(A_{cf}W) + \tr(A_{cc}) \nonumber \\
	& = \Big\langle A_{ff}W,W\Big\rangle_F + 2\Big\langle W, A_{fc}\Big\rangle_F + \tr(A_{cc}) \label{eq:term2}
\end{align}
\end{linenomath}

\begin{linenomath}
Substituting equations \eqref{eq:term1} and \eqref{eq:term2} into \eqref{eq:gen_func0} gives a functional of $W$ to minimize in forming
$P$. Dropping terms independent of $W$ and pulling out a factor of two for a more familiar form, define
\begin{align}
\begin{split}
\mathcal{F}(W) & = \frac{\tau}{2}\Big\langle A_{ff}W,W\Big\rangle_F + \frac{c_2(1-\tau)}{2}\Big\langle X_{ff}WB_cB_c^T, W\Big\rangle_F \\
	&\hspace{10ex} - \Big\langle W, c_2(1-\tau)X_{ff}B_fB_c^T - \tau A_{fc}\Big\rangle_F. \label{eq:functional}
\end{split}
\end{align}
Observe that \eqref{eq:functional} is a quadratic functional in $W$. Define a bounded linear operator, $\mathcal{L}$, and right-hand-side,
$\mathcal{B}$, as
\begin{align}
\mathcal{L}W & = \tau A_{ff}W +  c_2(1-\tau)X_{ff}WB_cB_c^T \label{eq:linop} \\
\mathcal{B} & = c_2(1-\tau)X_{ff}B_fB_c^T - \tau A_{fc}, \notag
\end{align}
in which case $\mathcal{F}(W) = \tfrac{1}{2}\langle \mathcal{L}W, W\rangle_F - \langle W,\mathcal{B}\rangle_F$. Note that
if $A_{ff}$ and $X_{ff}$ are symmetric and positive definite, then $\mathcal{L}$ is self-adjoint and positive definite in the Frobenius norm:
\begin{align*} 
\Big\langle \mathcal{L}W, Z\Big\rangle_F & = \Big\langle \tau A_{ff}W, Z\Big\rangle_F + c_2(1-\tau)\Big\langle X_{ff}WB_cB_c^T, Z\Big\rangle_F \\
& = \Big\langle \tau W, A_{ff}Z\Big\rangle_F + c_2(1-\tau)\Big\langle W, X_{ff}ZB_cB_c^T \Big\rangle_F \\
& = \tau\Big\langle W, \mathcal{L}Z\Big\rangle_F, \\
\Big\langle \mathcal{L}W,W\Big\rangle_F & = \tau\Big\langle A_{ff}W, W\Big\rangle_F + c_2(1-\tau)\Big\langle X_{ff}WB_cB_c^T, W\Big\rangle_F \\
& = \tau\Big\langle A_{ff}W, W\Big\rangle_F + c_2(1-\tau)\Big\langle X_{ff}WB_c,WB_c\Big\rangle_F \\
& > 0 \text{ when }W\neq 0.
\end{align*}
Using the symmetry of $\mathcal{L}$, the first and second Frech\'et derivative of $\mathcal{F}$ are given by:
\begin{align*}
\mathcal{F}'(W)[V] & = \lim_{\alpha\to 0} \frac{\mathcal{F}(W + \alpha V) - \mathcal{F}(W)}{\alpha} \\
& = \Big\langle \mathcal{L}W - \mathcal{B}, V\Big\rangle_F \\
\mathcal{F}''(W)[V][U] & = \Big\langle \mathcal{L}U, V\Big\rangle_F.
\end{align*}
\end{linenomath}
Since $\mathcal{L}$ is self-adjoint and positive definite, $\mathcal{F}''(W)[V] \geq 0$ $\forall$ $V$. Thus, the minimum of $\mathcal{F}$
in $W$ is achieved at $W$ such that $\mathcal{F}'(W)[V] = 0$, and $\mathcal{F}'(W) = 0$ $\forall$ $V$ if and only if
$\mathcal{L}W = \mathcal{B}$. This has a unique solution, $W = \mathcal{L}^{-1}\mathcal{B}$. However, it is likely that
$\mathcal{L}^{-1}\mathcal{B}$ is dense and not practical, motivating a constrained sparsity pattern for $W$.

\subsection{Constrained sparsity pattern}

In practice, the sparsity pattern of $W$ must be fixed a priori in order to control the operator complexity of $W$ and $A_c$.
Define a vector space
\[
\mathcal{X} = \Big\{ W \text{ : } W\in\mathbb{R}^{N_f\times N_c}, W_{ij} = 0 \text{ if } (i,j)\not\in \mathcal{N}\Big\},
\]
for a set of indices $\mathcal{N}$ denoting a fixed sparsity pattern for $W$. A Hilbert space $\mathcal{H}$ can be defined
over $\mathcal{X}$ with the Frobenius inner product, $\langle A,B\rangle_F = \sum_{ij} A_{ij}B_{ij}$. It is easily verified that
$\mathcal{X}$ is complete over the norm induced by $\langle\cdot,\cdot,\rangle_F$ due to the completeness of $\mathbb{R}$.
Now define the bounded linear functional $\hat{\mathcal{L}}:\mathcal{H}\to\mathcal{H}$ as 
\[
(\hat{\mathcal{L}}W)_{ij} =
\begin{cases} (\mathcal{L}W)_{ij} & (i,j)\in\mathcal{N} \\ 0 & (i,j)\not\in\mathcal{N}\end{cases},
\]
and a corresponding bilinear form
\[
a(W,V) = \Big\langle \hat{\mathcal{L}}W, V\Big\rangle_F.
\]
A quadratic form as in \eqref{eq:functional} restricted over $\mathcal{N}$ can then be defined as 
\begin{equation}
\hat{\mathcal{F}}(W) = \frac{1}{2}\Big\langle \hat{\mathcal{L}}W, W\Big\rangle_F -  \Big\langle W, \hat{\mathcal{B}}\Big\rangle_F,
\label{eq:sp_func}
\end{equation}
where $\hat{\mathcal{B}}\in\mathcal{H}$ is $\mathcal{B}$ restricted to $\mathcal{N}$. Note that in $\mathcal{H}$,
$\langle W,\mathcal{B}\rangle_F = \langle W,\hat{\mathcal{B}}\rangle_F$. A similar derivation as shown for $\mathcal{L}$ confirms
that $\hat{\mathcal{L}}$ is self-adjoint and $a(W,V)$ symmetric. Then, observe that for $W\in\mathcal{H}, W\neq 0$,
$\hat{\mathcal{L}}$ and $a(W,V)$ are positive:
\[
\Big\langle \hat{\mathcal{L}}W,W\Big\rangle_F = \Big\langle \mathcal{L}W,W\Big\rangle_F > 0,
\]
The following standard lemma of functional analysis can then be
invoked to find a minimizer of $\hat{\mathcal{F}}(W)$ \eqref{eq:sp_func}.
\begin{lemma}
  \label{lem:solution}
  \begin{linenomath}
Let $a(x,y)$ be a bounded, symmetric, positive-definite bilinear form
on a Hilbert space $\mathcal{H}$, and $\mathcal{G}(x)$ be a
bounded linear functional on $\mathcal{H}$. Then the following are equivalent
\begin{align}
x & = \min_{x\in\mathcal{H}}\hspace{1ex} \frac{1}{2}a(x,x) - \mathcal{G}(x) + C \label{eq:lem1_1} \\
x & \textnormal{ satisfies } a(x,y) = \mathcal{G}(y) \hspace{1ex} \text{ for all }y\in\mathcal{H}
\label{eq:lem1_2}
\end{align}
\end{linenomath}
Furthermore, there exists a unique solution $x\in\mathcal{H}$ satisfying
\eqref{eq:lem1_1}, \eqref{eq:lem1_2}.
\end{lemma}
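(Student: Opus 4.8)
The plan is to prove this in two stages: first the equivalence of the minimization problem \eqref{eq:lem1_1} and the variational equation \eqref{eq:lem1_2}, and then existence and uniqueness of a solution to \eqref{eq:lem1_2}. Write $F(x) = \tfrac{1}{2}a(x,x) - \mathcal{G}(x) + C$. The key computation is to expand $F$ along an arbitrary direction: for any $y\in\mathcal{H}$ and scalar $t$, bilinearity and symmetry of $a$ together with linearity of $\mathcal{G}$ give
\[
F(x + ty) = F(x) + t\big(a(x,y) - \mathcal{G}(y)\big) + \tfrac{t^2}{2}a(y,y).
\]
From this single identity both implications follow. If $x$ satisfies \eqref{eq:lem1_2}, the linear term vanishes for every $y$, and since $a(y,y) > 0$ for $y\neq 0$ we obtain $F(x+ty) > F(x)$ whenever $ty\neq 0$; hence $x$ is the strict minimizer. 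Conversely, if $x$ minimizes $F$, then $t\mapsto F(x+ty)$ has a minimum at $t=0$ for every fixed $y$, so its derivative there, $a(x,y) - \mathcal{G}(y)$, must vanish; as $y$ is arbitrary this is exactly \eqref{eq:lem1_2}.

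For existence and uniqueness I would appeal to the Riesz representation theorem. Since $a$ is bounded, symmetric, and positive-definite, it defines an inner product $\langle u,v\rangle_a := a(u,v)$ on $\mathcal{H}$ whose induced norm is equivalent to the original one; thus $(\mathcal{H}, \langle\cdot,\cdot\rangle_a)$ is again a Hilbert space, and $\mathcal{G}$ remains a bounded linear functional on it. Riesz representation then furnishes a unique $x\in\mathcal{H}$ with $\langle x,y\rangle_a = \mathcal{G}(y)$ for all $y$, that is, $a(x,y)=\mathcal{G}(y)$, which is precisely \eqref{eq:lem1_2}. Uniqueness of the Riesz representative gives uniqueness of the solution, and by the equivalence just established this same $x$ is the unique minimizer.

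The main obstacle is the passage from positive-definiteness (meaning $a(x,x)>0$ for $x\neq 0$) to the coercivity $a(x,x)\geq \alpha\|x\|^2$ needed for norm equivalence, and hence for Riesz to apply: in a general infinite-dimensional Hilbert space these are not the same, and coercivity must be assumed as a hypothesis, as in the Lax--Milgram theorem. In the present application, however, $\mathcal{H}$ is the finite-dimensional space $\mathcal{X}$ of matrices with a fixed sparsity pattern, so positive-definiteness yields coercivity automatically, since $a(x,x)$ is continuous and strictly positive on the compact unit sphere and therefore attains a positive minimum there. An alternative, fully self-contained route that sidesteps Riesz is the direct method: coercivity makes $F$ bounded below, and the quadratic identity above shows that any minimizing sequence is Cauchy, so completeness of $\mathcal{H}$ delivers a minimizer, which by the first stage solves \eqref{eq:lem1_2}.
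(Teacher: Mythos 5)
Your proof is correct, but there is nothing in the paper to compare it against line by line: the paper offers no proof of Lemma~\ref{lem:solution} at all, invoking it as a ``standard lemma of functional analysis'' (the variational form of the Riesz representation theorem, equivalently the symmetric case of Lax--Milgram). What you wrote is precisely that standard argument. The single identity
\[
F(x+ty)=F(x)+t\bigl(a(x,y)-\mathcal{G}(y)\bigr)+\tfrac{t^{2}}{2}\,a(y,y)
\]
cleanly gives both directions of the equivalence of \eqref{eq:lem1_1} and \eqref{eq:lem1_2} (with strictness of the minimum), and Riesz representation in the $a$-inner product gives existence and uniqueness, so your two-stage structure is exactly how this lemma is proved in the references the paper is implicitly leaning on.

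Your closing observation is the one substantive point, and you resolve it correctly: with ``positive-definite'' read literally as $a(x,x)>0$ for all $x\neq 0$, the lemma is false on a general infinite-dimensional Hilbert space, because the $a$-norm need be neither equivalent to the ambient norm nor complete, so Riesz does not apply. A concrete failure: on $\ell^{2}$ take $a(x,y)=\sum_{n} n^{-1}x_{n}y_{n}$ and $\mathcal{G}(y)=\sum_{n} n^{-1}y_{n}$; both are bounded and $a$ is symmetric positive-definite, yet $a(x,y)=\mathcal{G}(y)$ for all $y$ would force $x_{n}=1$ for every $n$, which is not in $\ell^{2}$, and indeed $\inf F=-\infty$ along $x=(1,\dots,1,0,\dots)$. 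So coercivity $a(x,x)\geq\alpha\|x\|^{2}$ must either be assumed, as in Lax--Milgram, or supplied by the setting. In the paper's application it is supplied: $\mathcal{H}$ is the finite-dimensional space $\mathcal{X}$ of matrices with a fixed sparsity pattern $\mathcal{N}$, on which $\hat{\mathcal{L}}$ is positive definite, and your compactness argument on the unit sphere upgrades definiteness to coercivity automatically. Your proof therefore both fills in the argument the paper omits and makes explicit the hypothesis under which the paper's invocation of the lemma is legitimate; the alternative direct-method route you sketch (minimizing sequences are Cauchy by the parallelogram-type identity) is equally valid once coercivity is in hand, and is the natural fallback if one wants to avoid Riesz entirely.
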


Based on Lemma \ref{lem:solution}, we seek the unique solution to
\begin{equation}
\hat{\mathcal{L}}W = \hat{B}, \hspace{2ex}W\in\mathcal{H},\label{eq:sp_solution}
\end{equation}
which can be iterated towards using the preconditioned conjugate gradient method
introduced in Section \ref{sec:pcg}.

\begin{remark}
A conceptual limiting case of the proposed weighted energy minimization is to interpolate candidate vectors exactly
and minimize energy based on that constraint. However, this does not directly fit into the framework of \eqref{eq:gen_func0}.
Constrained energy-minimization has been proposed in various forms \cite{Brannick:2007fb,Mandel:1999wg,Olson:2011fg,Wan:1999ky},
and was used as a basis for root-node AMG in \cite{Manteuffel:2016vd}. Defining the affine space
$\mathcal{A} = \{ W \text{ : } W\in\mathcal{H}\text{ and } WB_c = B_f\}$, the constrained minimization problem is given by 
\begin{equation}
W  = \argmin \Big\langle A_{ff}W + A_{fc},W\Big\rangle_F, \hspace{1ex} W\in\mathcal{A}.\label{eq:constrained}
\end{equation}
Since the linear operator now consists of normal matrix multliplication, $A_{ff}W$ as opposed to the left and right multiplication
in \eqref{eq:linop}, the existence and uniqueness of a solution to \eqref{eq:constrained} can be shown in a linear algebra
setting (see \cite{Olson:2011fg}) along with a CG implementation based on projecting into $\mathcal{A}$.
\end{remark}

\section{Preconditioned conjugate gradient}\label{sec:pcg}

Because $\hat{\mathcal{L}}$ is self-adjoint and positive in $\mathcal{H}$, conjugate gradient (CG) in the Hilbert space setting is a
competitive approach to solving \eqref{eq:sp_solution} in an iterative fashion. It is generally advisable to precondition CG iterations
for optimal convergence. Here, we construct a diagonal preconditioner for \eqref{eq:sp_solution} to make iterations more robust
when $\hat{\mathcal{L}}$ is poorly conditioned at a marginal increase of computational cost.

Unlike with matrices, however, it is not clear what the ``diagonal'' of $\hat{\mathcal{L}}$ is. Let $W\in\mathbb{R}^{N_f\times N_c}$, and
define the operator $\overline{(W)}$ as the columns of $W$ stacked in a column-vector. Note that $\overline{(W^T)}$ then gives the
rows of $W$ stacked as a column-vector. Let $Y$ be the permutation matrix such that  $\overline{(W)} = Y\overline{(W^T)}$ and
$YY^T = Y^TY = I$, which can be thought of as a mapping of $W$ from row-major format to column-major format. First note the following
lemma with regards to Kronecker products and the action of $Y$. 
\begin{lemma}\label{lem:kron}
Let $Y$ be a permutation matrix mapping $W\in\mathbb{R}^{N_f\times N_c}$ from row-major format to column-major format, that is,
$\overline{(W)} = Y\overline{(W^T)}$. Then, for any $P\in\mathbb{R}^{N_f\times N_f}$ and $Q\in\mathbb{R}^{N_c\times N_c}$,
\[
Y(P\otimes Q)Y^T = Q\otimes P
\]
\end{lemma}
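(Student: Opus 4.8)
The plan is to recognize $Y$ as the commutation (``perfect shuffle'') matrix and to prove the identity by checking that both sides agree as linear maps on an arbitrary vectorized matrix, using the standard relation between Kronecker products and vectorization. Because $Y$ is a permutation matrix, $Y^T = Y^{-1}$, so it suffices to show that $Y(P\otimes Q)Y^T$ and $Q\otimes P$ send $\overline{(Z)}$ to the same vector for every $Z\in\mathbb{R}^{N_f\times N_c}$.

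First I would restate the defining property of $Y$ in both directions. The hypothesis $\overline{(W)} = Y\,\overline{(W^T)}$ says, on writing $M = W^T\in\mathbb{R}^{N_c\times N_f}$, that $Y\,\overline{(M)} = \overline{(M^T)}$ for every $M\in\mathbb{R}^{N_c\times N_f}$; applying $Y^T = Y^{-1}$ then gives $Y^T\,\overline{(Z)} = \overline{(Z^T)}$ for every $Z\in\mathbb{R}^{N_f\times N_c}$. I would also recall the standard vectorization identity $\overline{(AXB)} = (B^T\otimes A)\,\overline{(X)}$, valid whenever the product $AXB$ is defined.

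With these tools, I would evaluate $Y(P\otimes Q)Y^T\,\overline{(Z)}$ one factor at a time. Applying $Y^T$ produces $\overline{(Z^T)}$ with $Z^T\in\mathbb{R}^{N_c\times N_f}$; applying $P\otimes Q$ via the vectorization identity (with $A=Q$, $B=P^T$) yields $\overline{(QZ^TP^T)}$; and applying $Y$ transposes the argument to give $\overline{(PZQ^T)}$. On the other side, the same identity (now with $A=P$, $B=Q^T$) gives $(Q\otimes P)\,\overline{(Z)} = \overline{(PZQ^T)}$. Since both expressions equal $\overline{(PZQ^T)}$ for all $Z$ and $\overline{(\cdot)}$ is a bijection, the two operators coincide, establishing $Y(P\otimes Q)Y^T = Q\otimes P$.

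The only delicate point, and the step most likely to cause an error, is the transpose-and-dimension bookkeeping: $P\otimes Q$ and $Q\otimes P$ act on vectorizations of matrices with swapped shapes ($N_c\times N_f$ versus $N_f\times N_c$), so each use of the $\overline{(AXB)}$ identity must be applied to the correctly shaped intermediate matrix and with the correct assignment of $A$ and $B$ to $P^T,Q$ versus $P,Q^T$. As a cross-check one could instead expand $Y$ in elementary-matrix form and verify the claimed equality entrywise, but the vectorization argument is cleaner and avoids the index manipulation.
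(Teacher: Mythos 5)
Your proof is correct, but it takes a genuinely different route from the paper. The paper works entirely at the index level: it derives the explicit nonzero pattern of $Y$ from the row-major/column-major relations $\overline{(W)}_{i+jN_f}=W_{ij}$ and $\overline{(W^T)}_{j+iN_c}=W_{ij}$ (namely $Y_{i+jN_f,\,j+iN_c}=1$), computes $[YAY^T]_{i+jN_f,k+lN_f}=A_{j+iN_c,l+kN_c}$, and matches this entrywise against the Kronecker-product entry formulas $[P\otimes Q]_{j+iN_c,l+kN_c}=P_{ik}Q_{jl}=[Q\otimes P]_{i+jN_f,k+lN_f}$. You instead argue at the operator level: you extract from the hypothesis the two-sided commutation property $Y\,\overline{(M)}=\overline{(M^T)}$ for $M\in\mathbb{R}^{N_c\times N_f}$ and $Y^T\,\overline{(Z)}=\overline{(Z^T)}$ for $Z\in\mathbb{R}^{N_f\times N_c}$, and then push an arbitrary $\overline{(Z)}$ through both sides using the standard identity $\overline{(AXB)}=(B^T\otimes A)\,\overline{(X)}$, landing on $\overline{(PZQ^T)}$ in both cases; since vectorization is a bijection, the operators agree. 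Your shape bookkeeping is right at every step (the assignments $A=Q$, $B=P^T$ on $N_c\times N_f$ matrices and $A=P$, $B=Q^T$ on $N_f\times N_c$ matrices are exactly the delicate point, and you handle it correctly). What each approach buys: yours is cleaner and coordinate-free, makes transparent that $Y$ is the commutation (perfect-shuffle) matrix with the textbook property $K(P\otimes Q)K^T=Q\otimes P$, and avoids index errors entirely---indeed the paper's own final display contains a typo, writing $[P\otimes Q]_{i+jN_f,k+lN_f}$ where its equation (2.2)-style formula \eqref{eq:kron2} shows $[Q\otimes P]$ is meant. The cost is that you import the $\mathrm{vec}$ identity as an external standard fact, whereas the paper's argument is self-contained and, as a byproduct, exhibits the explicit entries of $Y$, which supports the authors' emphasis on the row-major versus column-major storage interpretation; note also that the paper itself implicitly uses a special case of your $\mathrm{vec}$ identity (left multiplication, $\overline{(B_cB_c^TW^T)}=(I_{N_f}\otimes B_cB_c^T)\,\overline{(W^T)}$) in the derivation following the lemma, so your toolkit is consistent with how the lemma is actually deployed in Section 5.
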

\begin{proof}
\begin{linenomath}
  First consider the structure of $Y$. Note the following relations between $W,
\overline{(W)}$, and $\overline{(W^T)}$, i.e. $W$ stored as a standard dense
matrix, a column-major matrix, and a row-major matrix, respectively,
\begin{align*}
\overline{(W)}_{i+jN_f} & = W_{ij} \\
\overline{(W^T)}_{j+iN_c} & = W_{ij}.
\end{align*}
Defining $Y$ such that $Y\overline{(W^T)} = \overline{(W)}$, it
follows that
\[
Y_{i+jN_f, j+iN_c} = 1, \hspace{2ex}\text{ for $i\in[0,N_f],j\in[0,N_c]$},
\]
and the action of $YAY^T$ is then given as
\begin{equation}
[YAY^T]_{i+jN_f,k+lN_f} = A_{j+iN_c, l+kN_c}.\label{eq:lem_y}
\end{equation}
Now consider the element-wise Kronecker products of $P$ and $Q$:
\begin{align}
[P\otimes Q]_{j+iN_c, l+kN_c} = P_{ik}Q_{jl}, \label{eq:kron1} \\
[Q\otimes P]_{i+jN_f, k+lN_f} = P_{ik}Q_{jl}, \label{eq:kron2}
\end{align}
for $i,k\in[0,N_f], j,l\in[0,N_c]$. Combining \eqref{eq:lem_y}, \eqref{eq:kron1}, and \eqref{eq:kron2} gives
\begin{align*}
[Y(P\otimes Q)Y^T]_{i+jN_f,k+lN_f} & = (P\otimes Q)_{j+iN_c,l+kN_c} \\
& = P_{ik}Q_{jl} \\
& = [P\otimes Q]_{i+jN_f, k+lN_f}.
\end{align*}
It follows that $Y(P\otimes Q)Y^T = Q\otimes P$.
\end{linenomath}
\end{proof}

\begin{remark}
Lemma \ref{lem:kron} is a known result that we arrived at inadvertently, where $Y$ is known as the ``Perfect Shuffle'' matrix
\cite{Davio:1981gc}. Its relation to row-major and column-major
storage of matrices is, to our knowledge, a new contribution to the literature. 
\end{remark}

\begin{linenomath}
Now consider finding the diagonal of $\mathcal{L}$ by looking at $\overline{\mathcal{L}}$ as an operator on $\overline{(W)}$. To
do so, represent the action of $A_{ff}W$ through $(I_{N_c}\otimes A_{ff})\overline{(W)}$, where $(I_{N_c}\otimes A_{ff})$
gives a block diagonal matrix of $N_c$ $A_{ff}$'s, each to be multiplied by one column of $W$. Recalling the identity
$(A\otimes B)(C\otimes D) = (AB\otimes CD)$ and Lemma \ref{lem:kron},
\begin{align*}
\overline{\mathcal{L}}\overline{(W)} & = \tau(I_{N_c}\otimes A_{ff})\overline{(W)} + c_2(1-\tau)(I_{N_c}\otimes X_{ff})\overline{(WB_cB_c^T)} \\
& = \tau(I_{N_c}\otimes A_{ff})\overline{(W)} + c_2(1-\tau)(I_{N_c}\otimes X_{ff})YY^T\overline{(WB_cB_c^T)} \\
& = \tau(I_{N_c}\otimes A_{ff})\overline{(W)} + c_2(1-\tau)(I_{N_c}\otimes X_{ff}) Y \overline{(B_cB_c^TW^T)} \\
& = \tau(I_{N_c}\otimes A_{ff})\overline{(W)} + c_2(1-\tau)(I_{N_c}\otimes X_{ff})Y(I_{N_f}\otimes B_cB_c^T)Y^TY\overline{(W^T)} \\
& = \tau(I_{N_c}\otimes A_{ff})\overline{(W)} + c_2(1-\tau)(I_{N_c}\otimes X_{ff})(B_cB_c^T\otimes I_{N_f})\overline{(W)} \\
& = \tau(I_{N_c}\otimes A_{ff})\overline{(W)} + c_2(1-\tau)(B_cB_c^T\otimes X_{ff})\overline{(W)} \\
& = \Big[ \tau(I_{N_c}\otimes A_{ff}) + c_2(1-\tau)(B_cB_c^T\otimes X_{ff})\Big] \overline{(W)}.
\end{align*}
\end{linenomath}
This derivation can be naturally extended to $\mathcal{H}$, where $W\in\mathcal{H}$ has a specified sparsity pattern, $\mathcal{N}$, by
setting the $k$th row and column of $\overline{\mathcal{L}}$ equal to zero for all $k$ such that  $\overline{(W)}_k := W_{ij}$, and
$(i,j)\not\in\mathcal{N}$. Because $\overline{\mathcal{L}}$ is a block operator with block size $N_f\times N_f$, it follows that there is a
distinct ``diagonal'' in $\mathcal{L}$ corresponding to each $j$th column of $W$,
\begin{equation}
D_j = \tau\cdot\text{diag}(A_{ff}) + c_2(1-\tau)(B_cB_c^T)_{jj}\cdot\text{diag}(X_{ff}).\label{eq:d1}
\end{equation}
A diagonal preconditioning for $\hat{\mathcal{L}}$ is then given by taking the Hadamard product with $\mathcal{D}\in\mathcal{H}$,
where the $j$th column of $\mathcal{D}$ is given by the element-wise inverse of \eqref{eq:d1}:
\begin{equation}
\mathcal{D}_{ij} = \frac{1}{\tau(A_{ff})_{ii} + c_2(1-\tau)(B_cB_c^T)_{jj}(X_{ff})_{ii}}, \hspace{3ex} \text{ for } (i,j)\in\mathcal{N}. \label{eq:diag_pre}
\end{equation}
In the case of $A_{ff}$ having a constant or near-constant diagonal, and letting $X_{ff}$ be the diagonal of $A_{ff}$ (a common
practical choice), $\mathcal{D}$ is constant or near-constant. In practice, preconditioning with $\mathcal{D}$ is important for problems
in which diagonal elements of $A$ or target vectors $B$ consist of a wide range of values.

\subsection{Sylvester and Lyapunov equations}\label{sec:pcg:syl}

In fact, \eqref{eq:diag_pre} can be used to define a preconditioner for general systems of the Sylvestor- or Lyapunov-type: 
\begin{equation}\label{eq:gen_mat}
AWB + CWD = F,
\end{equation}
for solution matrix $W$, where $A,B,C$ and $D$ need not be symmetric (of course an appropriate Krylov solver must be chosen based
on properties of the functional). A diagonal preconditioner for \eqref{eq:gen_mat} is given by taking the Hadamard product with
\begin{equation}\label{eq:genp}
\widehat{\mathcal{D}}_{ij} = \frac{1}{B_{jj}A_{ii} + D_{jj} C_{ii}}.
\end{equation}

Systems of the form in \eqref{eq:gen_mat} arise often in the context of optimal control theory. Letting $B = C = I$, \eqref{eq:gen_mat} is
a Sylvester equation; letting $B = A^T$, $C = -I$, and $D = I$, \eqref{eq:gen_mat} is a discrete Lyapunov equation; and letting $B = C = I$
and $D = A^T$, \eqref{eq:gen_mat} is a continuous Lyapunov equation. There have been many efforts at developing Krylov methods and
preconditioners for such systems; for example, see \cite{Dehghan:2010dx,Ding:2010dc,Hochbruck:1995kj,Simoncini:2009ig,Xie:2010cl}.
Here we develop a simple preconditioner for problems of the form \eqref{eq:genp}, that is easy to construct and apply. 

\section{Numerical results}\label{sec:numerical}

In this section, we present numerical results for a variety of problems, comparing a weighted energy minimization and constrained
energy minimization, and analyzing the choice of constraint vector. The method proposed here is implemented in the PyAMG library
\cite{Bell:2008}; AMG methods such as strength-of-connection, coarsening, etc., follow that of \cite{Manteuffel:2016vd}, and the reader
is referred there for details. In figures, \textit{RN} refers to a constrained energy minimization using root-node AMG
\cite{Manteuffel:2016vd} and TM$_{10^k}$ refers to weighted energy minimization proposed here with weight $\tau = 10^k$. The test
problems considered are: 
\begin{enumerate}\setlength\itemsep{1em}
\item \underline{Anisotropic diffusion:} 2-dimensional rotated anisotropic diffusion, discretized with linear finite elements,
on an unstructured triangular mesh:
  
\begin{align}
   \label{eqn:diffusion}
-\nabla\cdot Q^T D Q\nabla u & = f\quad\textnormal{for ${\Omega} = [0,1]^2$},\\
   \label{eqn:diffusionbdy}
                           u & = 0\quad\textnormal{on $\partial\Omega$},
\end{align}
where
\begin{equation*}
Q = \begin{bmatrix} \cos{\theta} &          - \sin{\theta} \\
                    \sin{\theta} & \phantom{-}\cos{\theta}
    \end{bmatrix},
\qquad
D = \begin{bmatrix}1 & 0 \\ 0 & \epsilon \end{bmatrix}.
\end{equation*}
Due to the unstructured mesh, all angles $\theta\in(0,\sfrac{\pi}{2})$ are effectively equivalent
from a solver perspective; thus, moving forward we (arbitrarily) let $\theta = \sfrac{3\pi}{16}$. Mesh
spacing is taken to be $h\approx 1/1000$, resulting in approximately 1.25M DOFs.

\item \underline{Diffusion with an oscillatory coefficient:} 2-dimensional diffusion problem (as in equations
(\ref{eqn:diffusion}-\ref{eqn:diffusionbdy})), discretized with linear finite elements on a structured, regular triangular
mesh with $2N^2$ elements, with a piecewise linear coefficient that oscillates at every other grid point, regardless of mesh size:
\[
   Q = I,
   D = \begin{bmatrix} f(x,y) &  0 \\
                          0   & f(x,y) 
       \end{bmatrix},
   \]
   where
   \[
   f(x,y) = 
   \begin{cases}
      K    & \mbox{if mod}(Nx,2)=1 \mbox{ AND mod}(Ny,2)=0  \\
      K    & \mbox{if  mod}(Nx,2)=0 \mbox{ AND mod}(Ny,2)=1 \\
      1    & \mbox{if  mod}(Nx,2)=1 \mbox{ AND mod}(Ny,2)=1 \\
      1    & \mbox{if  mod}(Nx,2)=0 \mbox{ AND mod}(Ny,2)=0
   \end{cases}.
\]
Taking $h = 1/N$, we recognize $Nx = x/h$ as the (integer) index of a
mesh point in the $x$-direction, with a similar interpretation of
$Ny$.  For points $(x,y)$ not on the mesh, $f(x,y)$ is interpolated
linearly (as a function in the finite-element space).
This results in a number of coefficient oscillations that
grows proportionally with the number of mesh points, resulting in a checkerboard-like
pattern with alternating large and small coefficients.  Diffusion problems
with large and frequent coefficient changes, such as this one,
traditionally make difficult test problems for  multigrid methods. 

\end{enumerate}

\subsection{Determining $P$}\label{sec:results:P}

Here we look at AMG convergence as a function of the number of
iterations of CG used to determine $P$ and how constraint
vectors are enforced, either exactly, in a constrained energy minimization, or weighted by $\tau\in(0,1)$, in
a weighed energy minimization. For all results, a V-cycle is applied with two iterations of Jacobi pre- and post-relaxation
as a preconditioner for CG. Unless otherwise specified, the constraint vector is chosen as the constant
vector with several Jacobi smoothing iterations applied; weighted energy-minimization uses the 
diagonal preconditioning of Section \ref{sec:pcg}; and constrained energy-minimization uses the diagonal 
preconditioning of \cite{Olson:2011fg}.

\subsubsection{Anisotropic Diffusion}

Figure \ref{fig:poisson} shows the work-per-digit-of-accuracy (WPD) as
a function of the number of iterations of CG used to determine $P$,
for variations in energy minimization applied to anisotropic Poisson (Problem \# 1), with anisotropy $\epsilon \in\{1,0.001,0\}$.
WPD is defined as
\[
   WPD = \frac{-C}{\log_{10}(\rho)},
\]
where $C$ is the cycle-complexity of the multigrid solver and $\rho$ is the average convergence rate of the solver over
all iterations. This metric measures how many work-units, defined as the floating point operations to perform a single matrix-vector
multiply, are required to reduce the residual by one order of magnitude.  This metric is particularly useful for cross-comparisons
of solvers with differing sparsity structures. For more detail, see, for instance, \cite{Manteuffel:2016vd}.

Interpolation is fixed to use a degree-four sparsity pattern, that is, the sparsity pattern for each column of $P$ reaches out to neighbors within
graph distance four from the corresponding C-point (see \cite{Manteuffel:2016vd,Schroder:2012di}). This wider sparsity pattern often
leads to better convergence rates for difficult problems \cite{Schroder:2012di}, but also requires more iterations of energy-minimization.
Essentially, wider sparsity patterns create more interpolation coefficients in $P$, which are then determined through energy-minimization.

\begin{figure}[!ht]
  \centering
  \begin{subfigure}[b]{0.32\textwidth}
    \includegraphics[width=\textwidth]{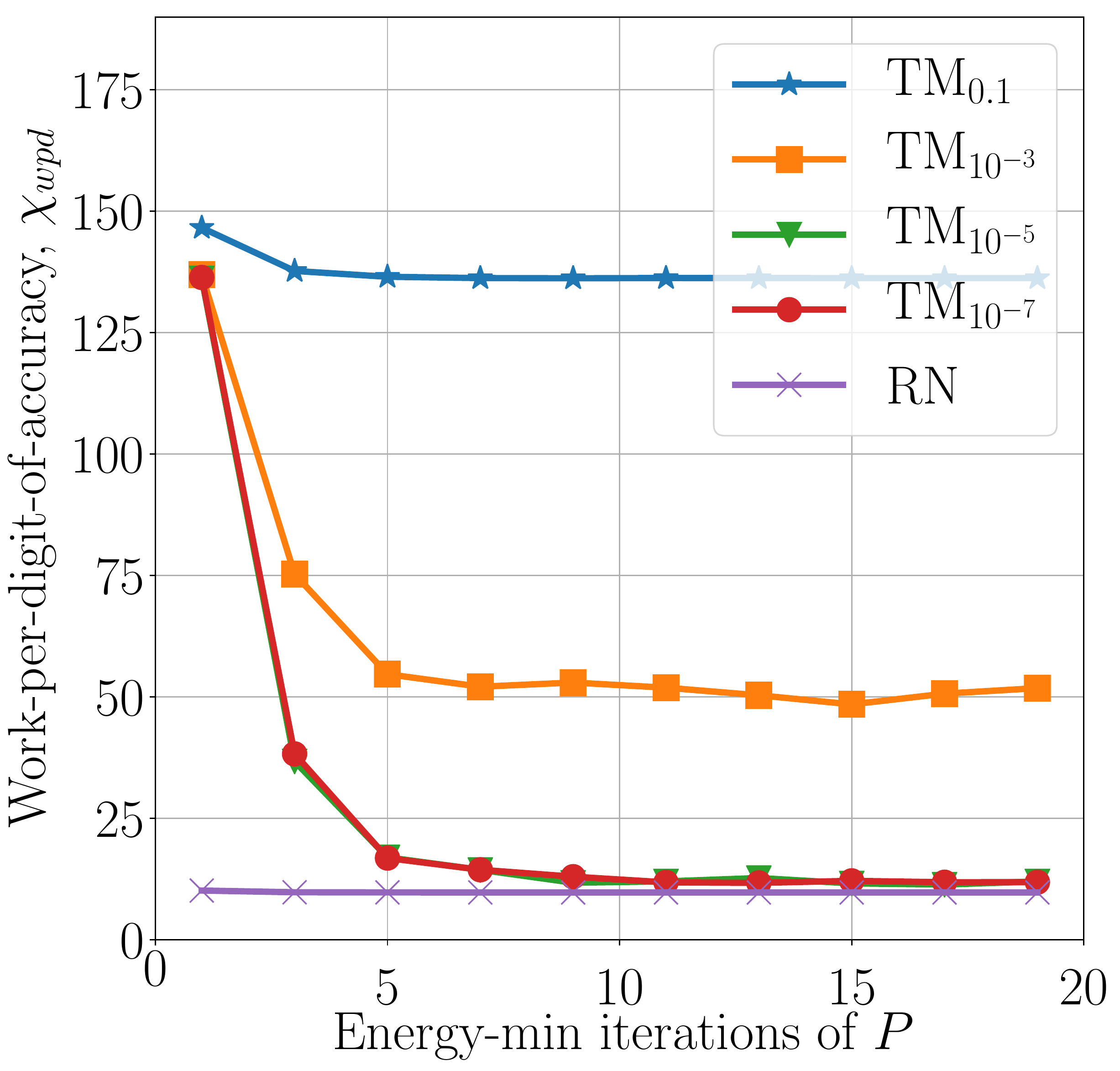}
    \caption{$\epsilon = 1$}
  \end{subfigure}
   \begin{subfigure}[b]{0.32\textwidth}
    \includegraphics[width=\textwidth]{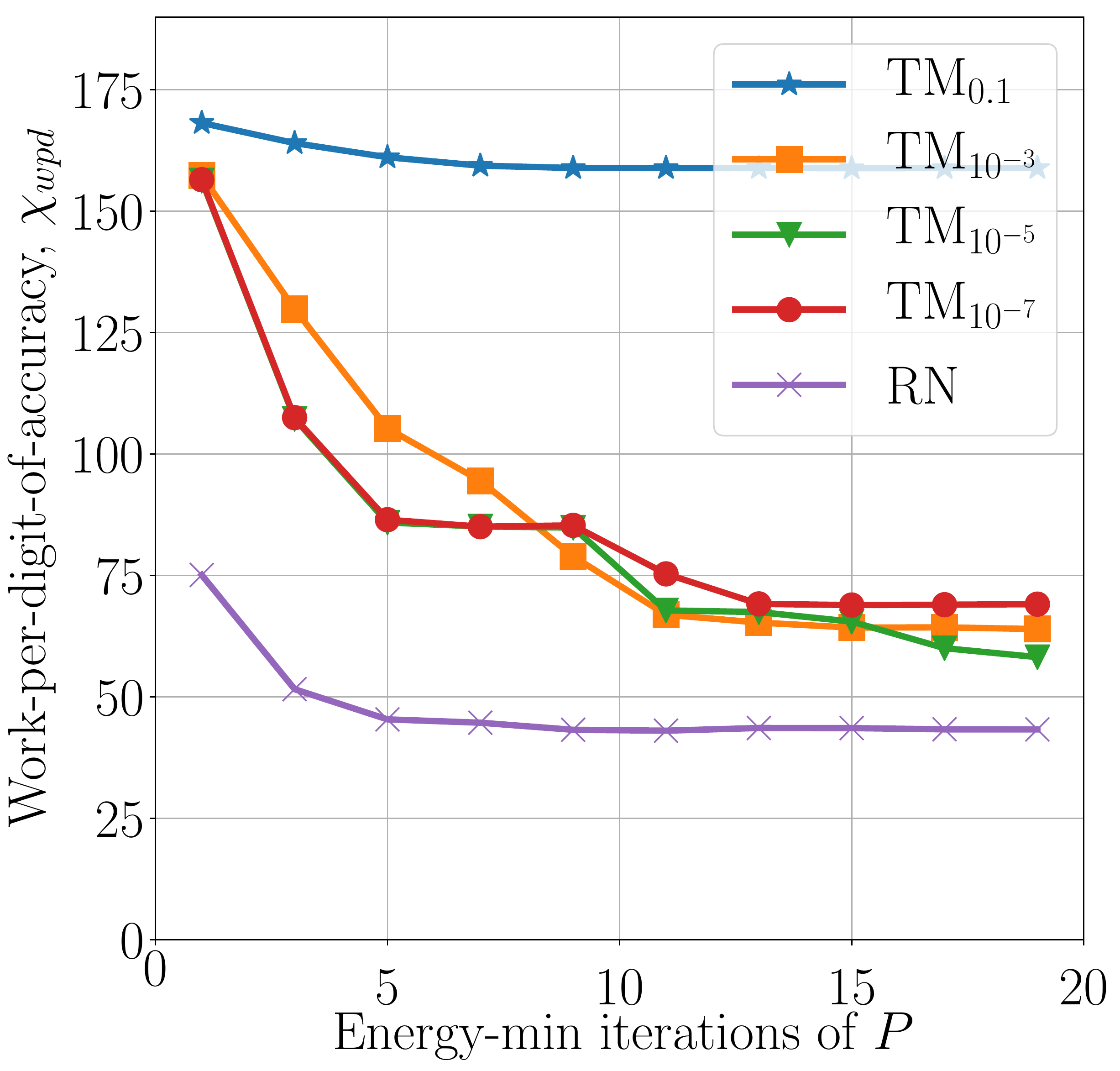}
    \caption{$\epsilon = 0.001$}
  \end{subfigure}
  \begin{subfigure}[b]{0.32\textwidth}
    \includegraphics[width=\textwidth]{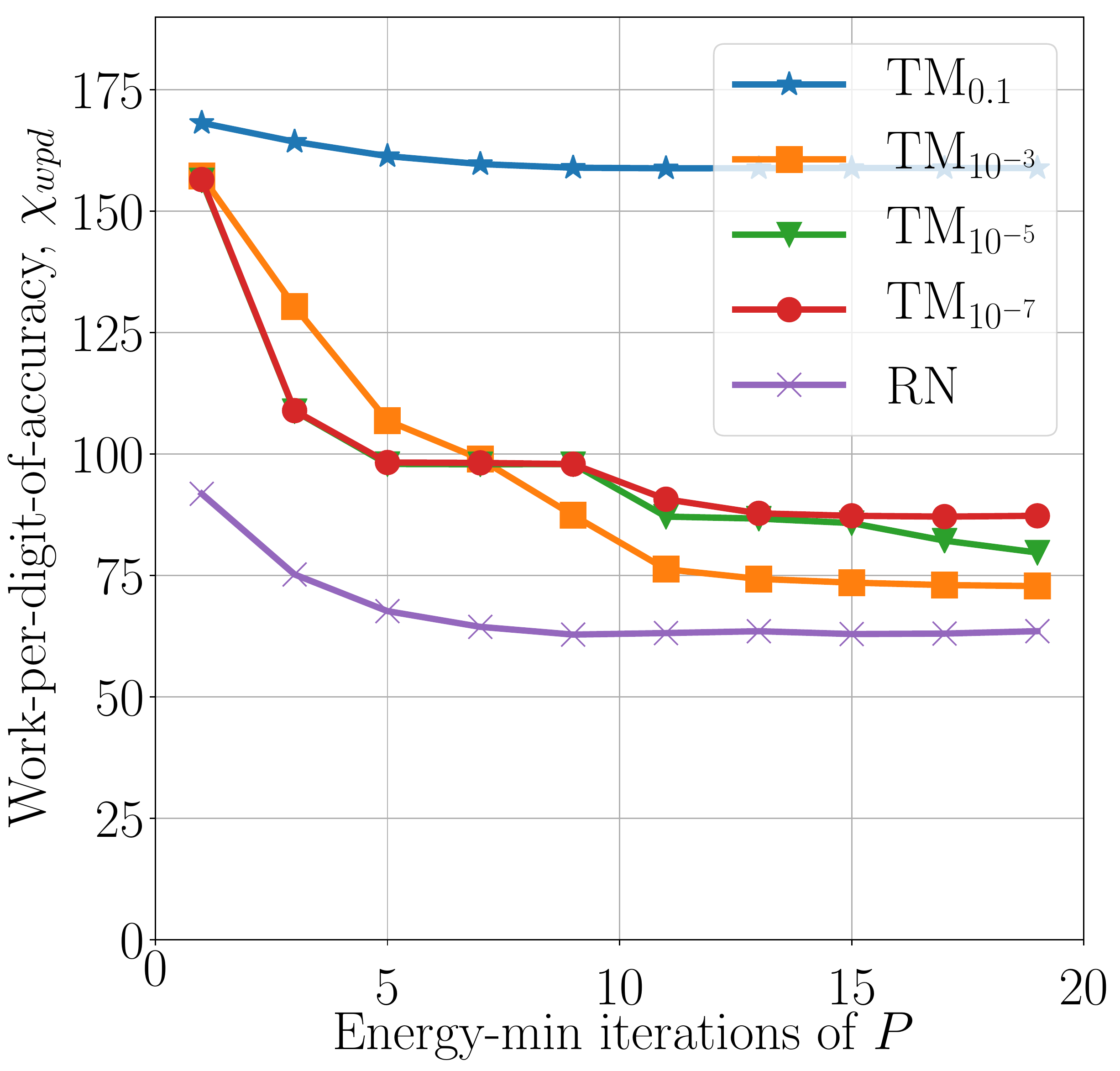}
    \caption{$\epsilon = 0$}
  \end{subfigure}
      \caption{WPD as a function of number of iterations of energy-minimization applied to $P$ for problem $\#1$ and (a) isotropic diffusion
      ($\epsilon=1$), (b) anisotropic diffusion ($\epsilon = 0.001$), and (c) totally anisotropic diffusion ($\epsilon = 0$).}
  \label{fig:poisson}
\end{figure}

Several immediate results follow from Figure \ref{fig:poisson}. First, there is a limit at which additional
 iterations to determine $P$ no longer improve convergence. For the isotropic case $(\epsilon = 1)$, the best convergence
rates are obtained by simply enforcing the constraint with a single constrained smoothing pass; additional
energy-minimization steps do not improve convergence. As the level of anisotropy increases ($\epsilon\to 0$), the number
of iterations of CG required to achieve the best performance increases. However, convergence of the
AMG solver based on a given constraint vector and coarsening scheme remains bounded below, regardless of further
energy minimization of $P$. Second, it is clear that enforcing the constraint exactly or near-exactly is fundamental
to good convergence, even for the simplest isotropic problem. Although theory tells us that interpolating low-energy
modes is necessary for good convergence, the fact that this cannot be achieved through weighted energy minimization
is slightly non-intuitive. Energy-minimization reduces the columns of $P$ in the $A$-norm, which should thus build $P$
to include low-energy modes in its range. Heuristically, it seems that after a handful of CG iterations, the range of $P$
would contain sufficient low-energy modes for good convergence. However, it is clear in Figure \ref{fig:poisson} that even
in the isotropic case, using a large $\tau = 0.1$ to focus on energy minimization over constraints leads to very poor
performance.

Together, these points underline the role of energy minimization in AMG convergence as an acceleration
technique. For some difficult problems, energy minimization is critical to achieving scalable convergence. 
Strongly anisotropic diffusion is one such example that typically proves difficult for standard AMG methods,
but can be solved effectively with constrained energy minimization \cite{Manteuffel:2016vd}. Nevertheless,
regardless of energy minimization, strong convergence cannot be obtained without enforcing or nearly-enforcing
an appropriate constraint vector (Figure \ref{fig:poisson}).

\subsubsection{Diffusion with an oscillatory coefficient}

Figure \ref{fig:jumppoisson} shows the WPD as a function of the number
of iterations of CG used to determine $P$ for variations in energy
minimization applied to the oscillating coefficient problem (Problem \# 2), with coefficient oscillations of $K = 10^6$ and $K=10^3$.  
For $K = 10^6$, Figure \ref{fig:jumppoisson:left} shows results for diagonal preconditioning
of energy-minimization and Figure \ref{fig:jumppoisson:center} shows
the case of no preconditioning. Figure \ref{fig:jumppoisson:right} shows the case of diagonal preconditioning with
$K=10^3$. Comparing Figures \ref{fig:jumppoisson:left} and \ref{fig:jumppoisson:center},
we see that using preconditioning in weighted energy minimization reduces the number of
iterations necessary to achieve good convergence. Moreover,
preconditioning appears to actually improve
the best achievable AMG convergence factor in practice. For constrained energy-minimization,
energy minimization iterations without preconditioning increases the WPD by $3-5\times$ within a reasonable
number of iterations on $P$ (of course, asymptotically the preconditioned and non-preconditioned results are
equivalent but, in practice, only $O(1)$ iterations are done.) This raises an interesting question as to if better
preconditioners for energy minimization can actually improve the AMG solver's performance in a way that additional
iterations with a diagonal preconditioner cannot in practical time; however, this is a topic for future study.

Focusing on the more practical solvers in Figure \ref{fig:jumppoisson:left}, we also see that the results mirror those in Figure
\ref{fig:poisson}. Overall, the constrained energy-minimization case performs best, with weighted energy-minimization able to
approach the constrained case only for the right $\tau$ values and enough energy-minimization iterations on $P$. Again, there
is a limit beyond which additional energy-minimization iterations no longer improve AMG convergence. For constrained
energy-minimization, relatively few iterations are needed. Lastly, enforcing the constraint exactly or near-exactly is fundamental
to good convergence. Using energy-minimization with larger $\tau$ values leads to poor performance. 

The effects of the oscillating coefficient $K$ can be seen by comparing Figures \ref{fig:jumppoisson:left} and
\ref{fig:jumppoisson:right}. Interestingly, the larger $K$ value leads to a need for smaller $\tau$ values for the weighted case,
(compare the curves for $\tau=10^{-7}$). Overall, apart from changing the size of beneficial $\tau$ values, the size of the coefficient
oscillation does not noticeably affect either the weighted or constrained energy-minimization.

A final note of interest is that larger interpolation sparsity patterns do not help here. Thus, a moderate sparsity pattern of degree
three is chosen for these results.   

\begin{figure}[!ht]
  \centering
  \begin{subfigure}[b]{0.32\textwidth}
    \includegraphics[width=\textwidth]{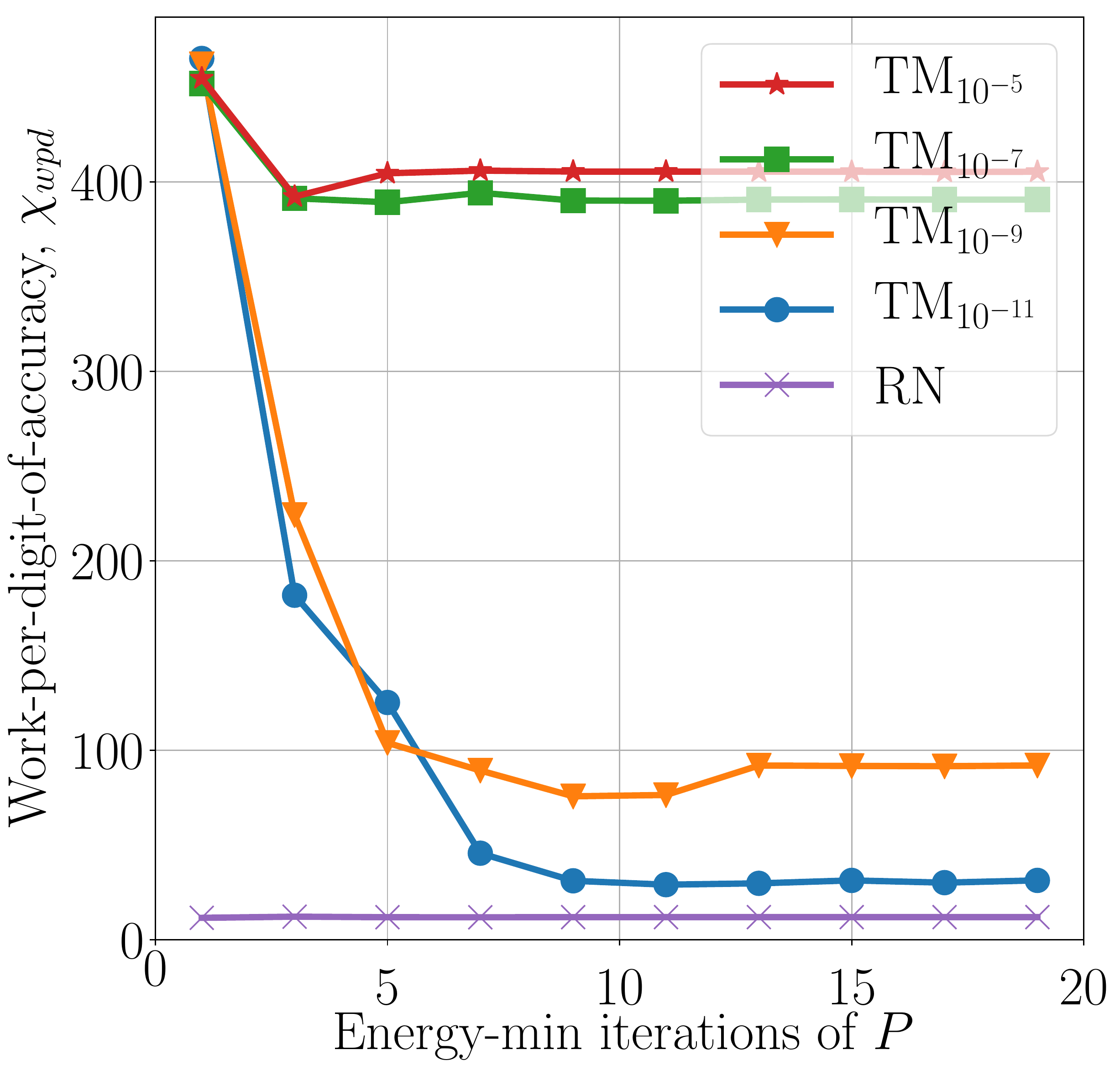}
    \caption{With diag. precon., $K = 10^6$}
    \label{fig:jumppoisson:left}
  \end{subfigure}
   \begin{subfigure}[b]{0.32\textwidth}
    \includegraphics[width=\textwidth]{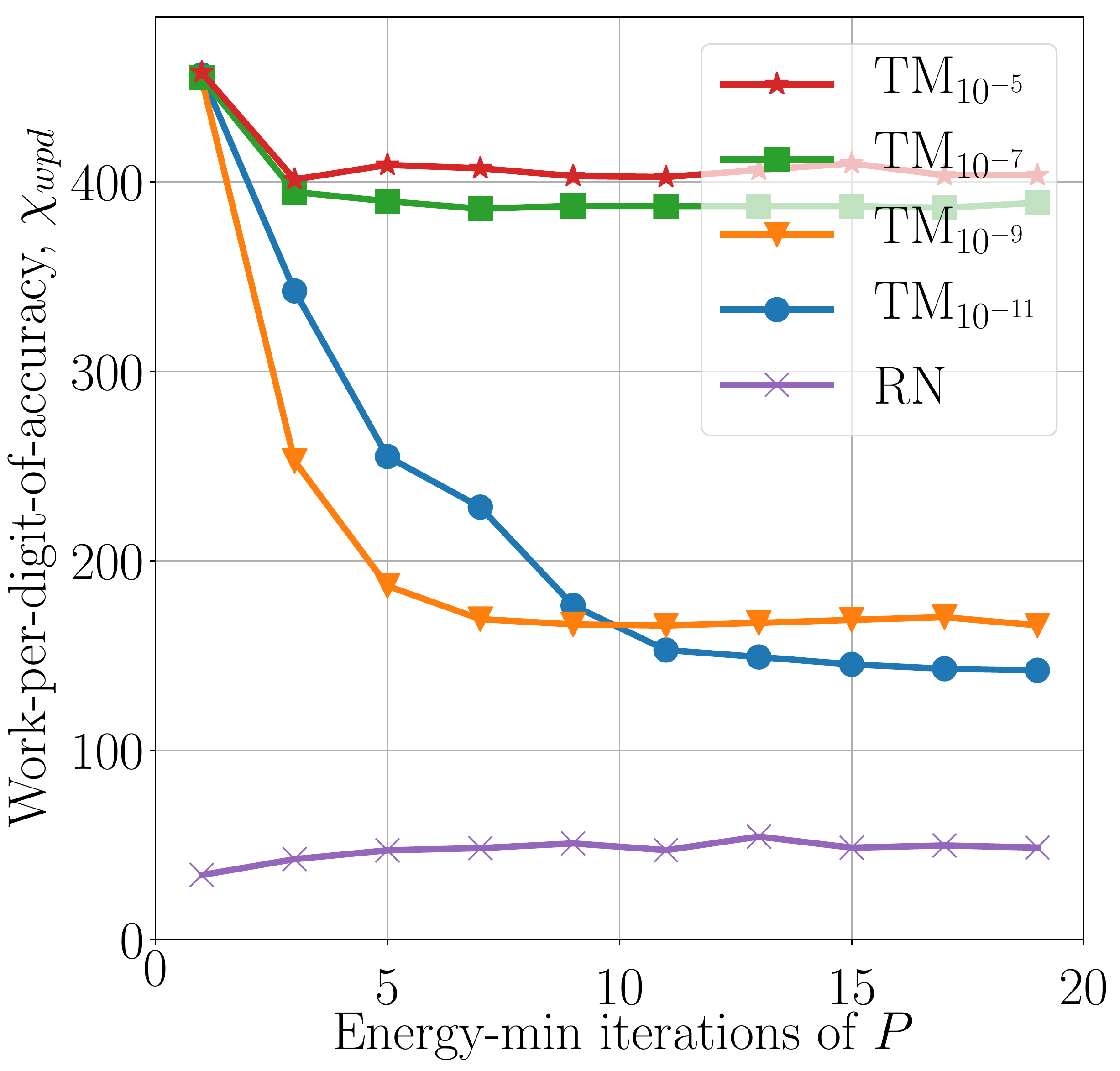}
    \caption{Without diag. precon., $K = 10^6$}
    \label{fig:jumppoisson:center}
  \end{subfigure}
  \begin{subfigure}[b]{0.32\textwidth}
    \includegraphics[width=\textwidth]{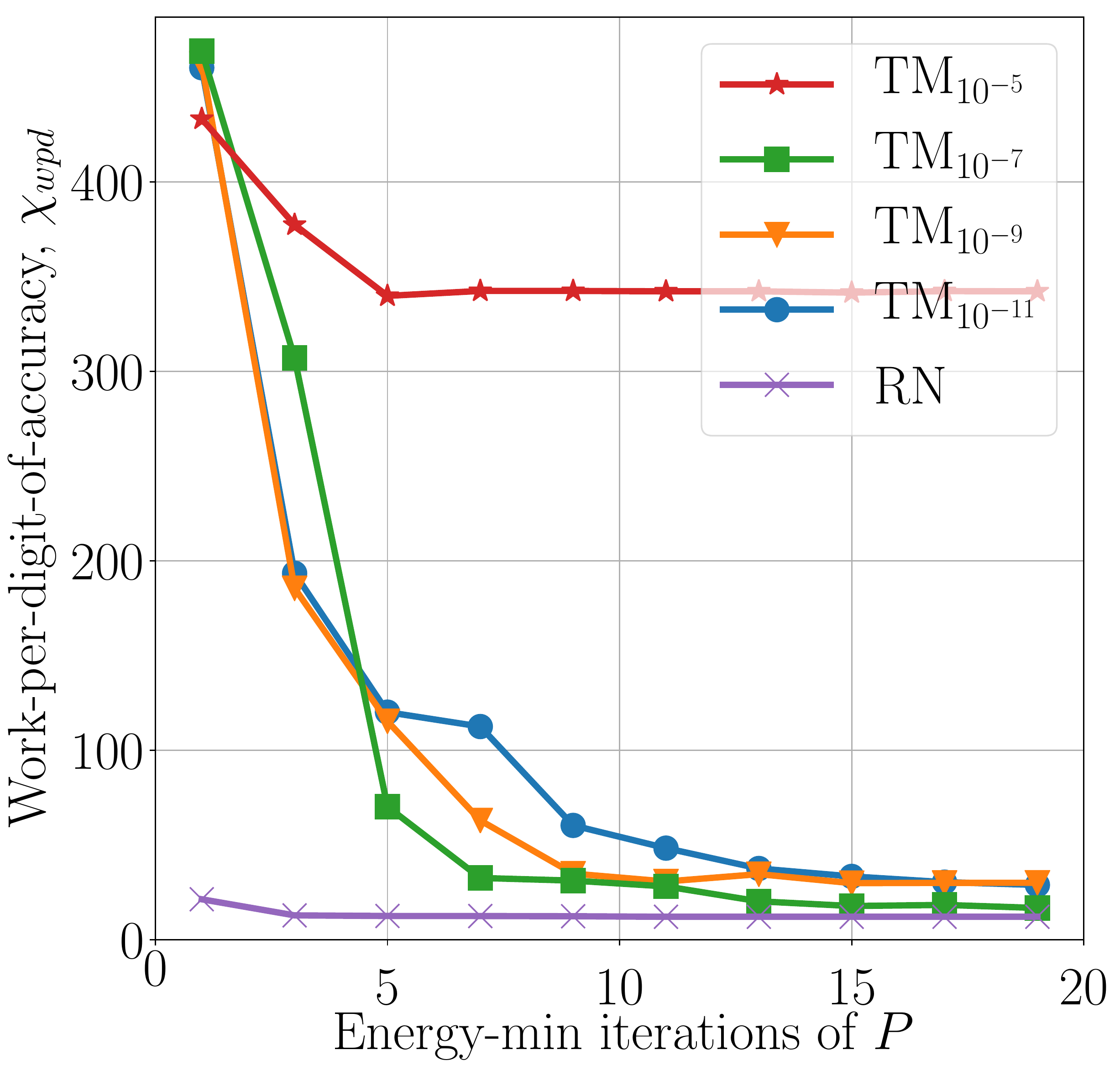}
    \caption{With diag. precon., $K = 10^3$}
    \label{fig:jumppoisson:right}
  \end{subfigure}
  \caption{Work-per-digit of accuracy, comparing weighted and constrained 
   energy-minimization, the use of diagonal preconditioning and two different coefficient jumps.}
  \label{fig:jumppoisson}
\end{figure}

\begin{remark} 
   We did not find tracking the CG residual norm during energy-minimization to
   be useful and, hence, omit plots of this information.  The key difficulty is
   that it is not clear how to connect the residual norm to the eventual
   multigrid convergence rate.  In other words, it is not clear how to use the
   residual norm to halt the energy-minimization process.  For instance, taking
   the cases of constrained energy-minimization from Figures \ref{fig:poisson} and
   \ref{fig:jumppoisson}, it is clear that at most five iterations of
   energy-minimization are needed.  However, the residual norm continues to
   decrease monotonically by multiple orders of magnitude from iteration five
   to iteration 19.  Yet, this extra residual reduction does not speed up
   convergence of the resulting multigrid solver.  In practice, the
   number of iterations needed typically equals the
   degree of the sparsity pattern of $P$ plus some small number, usually two or
   three.  This number of iterations is required to first fill the allowed sparsity
   pattern, and then to provide two or three iterations of additional improvement.
   \end{remark}

\subsection{Constraint vectors and adaptivity}\label{sec:results:target}

In Section \ref{sec:results:P}, we learned two things: (i) for good convergence, it is important that $P$ exactly or almost exactly
interpolates an appropriate constraint vector, and (ii) coupled with a good constraint, energy minimization can improve convergence,
but only by a fixed amount. This leads to the natural idea of adding an additional constraint vector when further energy minimization
of $P$ no longer improves convergence. Such an approach is the basis
of adaptive multigrid methods, where a set of constraint
vectors are developed that are then included or approximately included in the range of $P$ \cite{Brandt:2011hb,Brezina:2004eh,
DAmbra:2013iwa}. There are multiple ways
to generate constraint vectors; here we take the simple approach of generating a random vector $\mathbf{x}_0$ and applying some
form of improvement iterations (either relaxation or V-cycles) to reduce $\|\mathbf{x}_0\|_A$. Table \ref{tab:poisson} shows results
for constrained energy minimization AMG applied to the anisotropic
Poisson problem, with varying numbers of improvement iterations and
varying numbers of constraint vectors. 

\begin{figure}[!h]
  \centering
\begin{subfigure}[b]{0.475\textwidth}
  \begin{tabular}{c c c c c c c}     Vecs & Imp. Iters & OC & CC & CF \\\midrule
	 1 & 2 & 1.52 & 5.97 & 0.75 \\
	 2 & 2 & 1.55 & 6.01 & 0.74 \\
	 3 & 2 & 1.56 & 6.02 & 0.79 \\\midrule
	 1 & 5 & 1.51 & 5.95 & 0.64 \\
	 2 & 5 & 1.54 & 5.99 & 0.70 \\
	 3 & 5 & 1.55 & 6.01 & 0.73 \\\midrule
	 1 & 10 & 1.50 & 5.95 & 0.53 \\
	 2 & 10 & 1.54 & 5.98 & 0.67 \\
	 3 & 10 & 1.55 & 5.99 & 0.69 \\\midrule
	 1 & 25 & 1.50 & 5.95 & 0.49 \\
	 2 & 25 & 1.54 & 5.97 & 0.67 \\
	 3 & 25 & 1.55 & 5.98 & 0.65 \\\midrule
	 1 & 100 & 1.50 & 5.95 & 0.48 \\
	 2 & 100 & 1.50 & 5.95 & 0.50 \\
	 3 & 100 & 1.50 & 5.95 & 0.51 \\\midrule
  \end{tabular}
  \caption{Two-grid}
\end{subfigure}
\begin{subfigure}[b]{0.475\textwidth}
  \begin{tabular}{c c c c c c c}     Vecs & Imp. Iters & OC & CC & CF \\\midrule
	 1 & 2 & 1.64 & 9.39 & 0.76 \\
	 2 & 2 & 1.67 & 9.52 & 0.81 \\
	 3 & 2 & 1.67 & 9.50 & 0.85 \\\midrule
	 1 & 5 & 1.63 & 9.29 & 0.64 \\
	 2 & 5 & 1.66 & 9.44 & 0.78 \\
	 3 & 5 & 1.66 & 9.47 & 0.83 \\\midrule
	 1 & 10 & 1.62 & 9.27 & 0.54 \\
	 2 & 10 & 1.64 & 9.36 & 0.76 \\
	 3 & 10 & 1.66 & 9.42 & 0.82 \\\midrule
	 1 & 25 & 1.62 & 9.23 & 0.51 \\
	 2 & 25 & 1.66 & 9.32 & 0.69 \\
	 3 & 25 & 1.66 & 9.42 & 0.78 \\\midrule
	 1 & 100 & 1.62 & 9.28 & 0.50 \\
	 2 & 100 & 1.62 & 9.28 & 0.54 \\
	 3 & 100 & 1.62 & 9.27 & 0.54 \\\midrule
  \end{tabular}
   \caption{Multigrid}
\end{subfigure}
  \caption{Constrained energy minimization applied to a strongly
    anisotropic diffusion problem ($\epsilon=0.001$) in a two-grid
  and multigrid method. Constraints are initialized as a random vector; for the first constraint, Jacobi iterations are
  applied as improvement iterations. After an AMG hierarchy has been formed with one target, a new random vector
  is generated and V-cycles are applied as improvement iterations to generate a second target. The hierarchy is rebuilt
  using the new constraints, and so on. }\label{tab:poisson}
\end{figure}

Several interesting things follow from the results in Table \ref{tab:poisson}. First, the difference in convergence factor
between two-grid and multigrid is very small. This indicates that we are solving our coarse-grid problem well using V-cycles, and
that convergence is limited by how ``good'' the coarse-grid problem is, and not how accurately we are solving it. Moreover, naively adding
constraint vectors that were not accounted for in the range of $P$ does not improve convergence and, in fact, degrades
convergence in all cases, while increasing the setup complexity. Although more involved processes have been developed for
adaptive multigrid methods, these simple tests give insight that improving convergence is not as simple adding new constraint
vectors.

\section{Conclusions}\label{sec:conclusions}

This paper explores the role of energy minimization in AMG interpolation from a theoretical and practical
perspective. The eventual goal is to develop improved interpolation techniques that are more robust than current
state-of-the-art, without the significant overhead setup cost of fully adaptive methods. A minimization framework is
developed based on a weighted combination of interpolating known low-energy modes with a global energy
minimization over $P$. On one hand, accurately interpolating the constraint vectors proves to be of fundamental
importance to good convergence, as observed where constrained energy minimization consistently performs best,
and weighted energy minimization performs best with the relative weight of interpolating constraints $\gg 0.99$.
However, convergence generally does not improve when additional constraint vectors are added beyond the first.
This either means, for these test problems, (i) accurately interpolating one constraint vector leads to convergence
factors close to the optimal rate for the given coarse grid \cite{brannick2017optimal}, or (ii) there are other factors
fundamental to convergence of AMG that are not being addressed in this framework. Results here do not suggest
the newly proposed algorithm is superior to existing methods, but do provide insight on the convergence of
AMG in the practical setting, as well as the relation to AMG convergence theory. 

\section*{Acknowledgments}
The authors gratefully acknowledge the contributions of Ludmil
Zikatanov to the work presented here.

\bibliographystyle{siam}
\bibliography{tm_new.bib}

\end{document}